\newtheorem{thm}{Theorem}[section]
\newtheorem{cor}[thm]{Corollary}
\newtheorem{lem}[thm]{Lemma}
\newtheorem{prop}[thm]{Proposition}
\theoremstyle{definition}
\theoremstyle{remark}
\numberwithin{equation}{section}
\begin{document}
\setcounter{page}{1}

\title[Successive radii and Orlicz Minkowski addition]{Successive radii and Orlicz Minkowski addition}

\author[F. Chen, C. Yang,  M. Luo]{Fangwei Chen$^1$, Congli Yang$^2$,  Miao Luo$^{2}$,}

\address{1. Department of Mathematics and Statistics, Guizhou University of Finance and Economics,
Guiyang, Guizhou 550004, People's Republic of China}

\email{cfw-yy@126.com; chen.fangwei@yahoo.com}
\address{2, 3. School of Mathematics and Computer Science, Guizhou Normal
University, Guiyang, Guizhou 550001, People's Republic of China.}
\email{yangcongli@gznu.edu.cn}
\thanks{The  work is supported in part by CNSF (Grant No. 11161007, Grant No. 11101099), Guizhou (Unite) Foundation for Science and Technology (Grant
No. [2014] 2044, No. [2012] 2273, No. [2011] 16), Guizhou Technology Foundation for Selected Overseas Chinese Scholar and Doctor foundation of Guizhou Normal University.}


\subjclass[2010]{52A20, 52A40, 52A38.}

\keywords{Orlicz Minkowski sum, asymmetric Orlicz zonotopes, Shadow system, volume product, volume ratio}


\begin{abstract}
In this paper, we deal with the successive inner and outer radii with respect to Orlicz Minkowski sum. The upper and lower bounds for the radii of the Orlicz Minkowski sum of two convex bodies are established.
\end{abstract} \maketitle

Let $\mathcal K^n$ denote the set of convex bodies (compact, convex subsets with nonempty interiors) in Euclidean $n$-space, $\mathbb R^n$. Let $\langle\cdot,\cdot\rangle$ and $\rVert \cdot\lVert_2$ denote the standard inner product and the Euclidean norm in $\mathbb R^n$, respectively. Denote by $e_i$ ($i=1,\cdots,n$), the orthogonal unit vectors in $\mathbb R^n$. The $n$-dimensional unit ball and its boundary, i.e., the $(n-1)$-dimensional unit sphere is denoted by $B_n$ and $S^{n-1}$, respectively.

The volume of a set $K\in\mathcal K^n$, i.e., its $n$-dimensional Lebesgue measure is denoted by $|K|$. The set of all $i$-dimensional linear subspaces of $\mathbb R^n$ is denoted by $\mathcal L^n_i$. For $L\in\mathcal L^n_i$, $L^\bot$ denotes its orthogonal complement. Let $K\in \mathcal K^n$, $L\in\mathcal L^n_i$, the projection of $K$ onto $L$ is denoted by $K|L$.

Following the traditional notations, we use $D(K)$, $\omega (K)$, $R(K)$ and $r(K)$ to denote the diameter, minimal width, circumradius and inradius of a convex body $K$, respectively. The behavior of the diameter, minimal width, circumradius and inradius with respect to the Minkowski sum is well known(see \cite{sch-con1993}), namely
\begin{align*}
  D(K+K')\leq D(K)+D(K'),\,\,\,\,\,\,\,\,\,\,\,\, \omega(K+K')\leq \omega(K)+\omega(K'),\\
  R(K+K')\leq R(K)+R(K'),\,\,\,\,\,\,\,\,\,\,\,\, r(K+K')\leq r(K)+r(K').
\end{align*}

Let $K\in\mathcal K^n$, and $i=1,2,\cdots,n$, the successive outer and inner radii of $K$ are defined as
\begin{align*}
  R_i(K)=\min_{L\in\mathcal L^n_i}R(K|L),\,\,\,\,\,\,\,\,\,\,\,r_i(K)=\max_{L\in\mathcal L^n_i}\max_{x\in L^\bot}r(K\cap(x+L): x+L).
\end{align*}
Notice that $R_i(K)$ is the smallest radius of a  solid cylinder with $i$-dimensional spherical cross section containing $K$, and $r_i(K)$ is the radius of the greatest $i$-dimensional ball contained in $K$.
It is clear that the outer radii are increasing in $i$, whereas the inner radii are decreasing in $i$, and the following hold (see \cite{bon-fen-the1934}).
\begin{align*}
  R_n(K)=R(K),\,\,\,\,\,\,R_1(K)=\frac{\omega(K)}{2},\,\,\,\,\,\,\,r_n(K)=r(K),\,\,\,\,\,\,\,r_1(K)=\frac{D(K)}{2}.
\end{align*}
The first systematic study of the successive radii was developed in \cite{bet-hen-est1992}, and one can refer \cite{bet-hen-a1993,bra-rad2005,gon-her-suc2012,gon-her-on2014,gri-kle-inn1992,hen-a1992,hen-her-suc2009,hen-her-int2008} and references within for more details.

The radii of convex bodies which connected the Minkowski sum (or $L_p$-Minkowski sum) are studied by Gonz\'alez and Hern\'andez Cifre \cite{gon-her-suc2012,gon-her-on2014}.

Beginning with the articles \cite{hab-lut-yan-zha-the2010,lut-yan-zha-orl-cen2010,lut-yan-zha-orl-pro2010} of Haberl, Lutwak, Yang ang Zhang, a more wide extension of the $L_p$ Brunn-Minkowski theory emerged.  Recently, in a paper of Gardner, Hug and Weil \cite{gar-hug-wei-the2014}, a systematic studies are made on the Orlicz Minkowski addition, the Orlicz Brunn-Minkowski inequality and Orlicz Minkowski inequality are obtained. The Orlicz Brunn-Minkowski theory are established. See, e.g., \cite {bor-str2013,bor-lut-yan-zha-the2012,bor-lut-yan-zha-the2013,che-zho-yan-on2011,che-yan-zho-the2014,gar-hug-wei-the2014,
hab-lut-yan-zha-the2010,hua-he-on2012,zhu-zho-xu-dua2014,zou-xio-orl2014} about the Orlicz Brunn-Minkowski theory. In this context, the main goal of this paper is to seek the relations of the radii for Orlicz Minkowski sum.

Throughout this paper, let $ \mathcal C$ be the class of convex, strictly increasing functions $\varphi:[0,\infty)\rightarrow [0,\infty)$ satisfying $\varphi(0)=0$ and $\varphi(1)=1$. Here the normalization is a matter of convenience and other choices are possible.

Let $\mathcal K^n_o$ denote the class of convex bodies containing the origin. $K,L\in\mathcal K^n_o$, $\varphi\in \mathcal C$, the Orlicz Minkowski sum of $K$ and $L$ is the convex body $K+_\varphi L$ with support function
\begin{align*}
  h_{ K+_\varphi L}(x)=\inf\left\{\lambda>0:\varphi\left(\frac{h_K(x)}{\lambda}\right)+\varphi\left(\frac{h_L(x)}{\lambda}\right)\leq 1\right\}.
\end{align*}

If $\varphi(t)=t^p$, $p\geq 1$, then $K+_\varphi L$ is precisely the $L_p$ Minkowski sum $K+_p L$.

For the successive outer radii of Orlicz Minkowski sum, we establish the following theorem.
\begin{thm}\label{out-rad-ine-thm}
  Let $\varphi\in \mathcal C$ and $K, K'\in \mathcal K^n_o$. Then
  \begin{align}
   \label{out-rad-ine-1} &2\varphi^{-1}(1/2)R_1(K+_\varphi K')\geq R_1(K)+R_1(K'),\\
   \label{out-rad-ine-2} & 2\sqrt2\varphi^{-1}(1/2)R_i(K+_\varphi K')\geq R_i(K)+R_i(K'), \,\,\,\,\,\,\,\,\,\,\,i=2,\cdots, n.
  \end{align}
  All inequalities are best possible.
  \end{thm}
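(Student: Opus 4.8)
The plan is to trap $K+_\varphi K'$ between a dilate of the ordinary Minkowski sum $K+K'$ and then transfer the (essentially known) behaviour of the successive outer radii under Minkowski addition, the decisive new ingredient being a sharp lower estimate for the circumradius of a Minkowski sum. First I would record the comparison $\frac{1}{2\varphi^{-1}(1/2)}(K+K')\subseteq K+_\varphi K'$. Fixing $x\in\mathbb R^n$ and setting $\lambda=h_{K+_\varphi K'}(x)$ (we may assume $\lambda>0$), the infimum defining the Orlicz sum is attained with equality, $\varphi(h_K(x)/\lambda)+\varphi(h_{K'}(x)/\lambda)=1$, so convexity of $\varphi$ gives $\varphi\bigl((h_K(x)+h_{K'}(x))/(2\lambda)\bigr)\le\frac12$, hence $\lambda\ge\frac{1}{2\varphi^{-1}(1/2)}h_{K+K'}(x)$. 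Since each successive outer radius is monotone under inclusion and positively homogeneous of degree $1$, this already yields $R_i(K+_\varphi K')\ge\frac{1}{2\varphi^{-1}(1/2)}R_i(K+K')$ for every $i$, so it remains to bound $R_i(K+K')$ from below in terms of $R_i(K)$ and $R_i(K')$.

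Next I would prove the core lemma: for convex bodies $C,C'$ in $\mathbb R^m$ with $m\ge2$, $R(C+C')\ge\frac{1}{\sqrt2}\bigl(R(C)+R(C')\bigr)$. Because $R$ of a Minkowski sum is unchanged when either summand is translated, I may place the circumcentres of $C$ and $C'$ both at the origin, so $C\subseteq R(C)B_m$, $C'\subseteq R(C')B_m$; since the circumcentre of a body lies in the convex hull of the points where the body meets its circumsphere, there are $p_1,\dots,p_a\in C\cap R(C)S^{m-1}$, $p'_1,\dots,p'_b\in C'\cap R(C')S^{m-1}$ and weights $\lambda_i,\mu_j\ge0$ with $\sum_i\lambda_i=\sum_j\mu_j=1$, $\sum_i\lambda_i p_i=0$ and $\sum_j\mu_j p'_j=0$. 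For arbitrary $z\in\mathbb R^m$, expanding the squares and using both barycentre relations gives
\[
\sum_{i,j}\lambda_i\mu_j\,\bigl|p_i+p'_j-z\bigr|^2=R(C)^2+R(C')^2+|z|^2\ \ge\ R(C)^2+R(C')^2\ \ge\ \tfrac12\bigl(R(C)+R(C')\bigr)^2 ,
\]
so some $p_i+p'_j\in C+C'$ satisfies $|p_i+p'_j-z|\ge\frac{1}{\sqrt2}(R(C)+R(C'))$, and as $z$ was arbitrary the lemma follows.

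For $i\ge2$ and $L\in\mathcal L^n_i$ I would then use $(K+K')|L=K|L+K'|L$ and the lemma in $L\cong\mathbb R^i$ to get $R((K+K')|L)\ge\frac{1}{\sqrt2}(R(K|L)+R(K'|L))\ge\frac{1}{\sqrt2}(R_i(K)+R_i(K'))$; minimizing over $L$ and combining with the first comparison proves \eqref{out-rad-ine-2}. For $i=1$ one uses instead that for a line $L$ the projection is additive, $R((K+K')|L)=R(K|L)+R(K'|L)$, whence $R_1(K+K')\ge R_1(K)+R_1(K')$, giving \eqref{out-rad-ine-1}. Sharpness of \eqref{out-rad-ine-1} is immediate: for $K=K'$ one computes $K+_\varphi K=\frac{1}{\varphi^{-1}(1/2)}K$, so both sides equal $2R_1(K)$ for every $\varphi$. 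For \eqref{out-rad-ine-2} I would exhibit, for each $i\ge2$, full-dimensional ellipsoids that degenerate (as a shape parameter $\delta\downarrow0$) to the $(n-1)$-ball $aB_n\cap e_i^{\perp}$ and to the $(n-i+1)$-ball $aB_n\cap\operatorname{span}\{e_i,\dots,e_n\}$: both limits have $R_i=a$, while on $\operatorname{span}\{e_1,\dots,e_i\}$ their projections are an $(i-1)$-ball of radius $a$ and a perpendicular segment of half-length $a$, whose Minkowski sum is a right cylinder of circumradius $a\sqrt2$; combined with the lemma this forces $R_i$ of the sum to be exactly $a\sqrt2$, and running this through the $\varphi$-comparison shows the constant $2\sqrt2\,\varphi^{-1}(1/2)$ is attained in the limit (at least whenever $\varphi(1/\sqrt2)\ge\frac12$, in particular for $\varphi(t)=t$).

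The genuinely delicate point is the sharp constant $\frac{1}{\sqrt2}$ in the circumradius lemma: the easy estimates $R(C+C')\ge\max\{R(C),R(C')\}$, or the same averaging performed before translating the two circumcentres together, only give the factor $\frac12$, which would weaken the constant in \eqref{out-rad-ine-2} to $4\varphi^{-1}(1/2)$. It is the simultaneous use of the two barycentre identities $\sum\lambda_i p_i=\sum\mu_j p'_j=0$ — which kills both the mixed term $\langle\sum\lambda_i p_i,\sum\mu_j p'_j\rangle$ and the two $z$-terms in the averaged identity — that upgrades $\frac12$ to $\frac{1}{\sqrt2}$. A second, more bookkeeping-type obstacle is to make the degenerating families in the extremal examples rigorous, in particular to verify that the coordinate $i$-subspace really realizes $R_i$ of the (Orlicz) sum and to determine precisely for which $\varphi\in\mathcal C$ equality, rather than only an approximating sequence, can occur.
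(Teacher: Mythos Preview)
Your derivation of the inequalities \eqref{out-rad-ine-1} and \eqref{out-rad-ine-2} is correct and follows the same route as the paper: both reduce to the ordinary Minkowski sum via the containment $\frac{1}{2\varphi^{-1}(1/2)}(K+K')\subseteq K+_\varphi K'$ and then invoke $R_1(K+K')\ge R_1(K)+R_1(K')$ and $R_i(K+K')\ge\frac{1}{\sqrt2}(R_i(K)+R_i(K'))$ for $i\ge2$. The paper simply cites these last facts from Gonz\'alez--Hern\'andez Cifre, whereas you supply a clean self-contained proof via the barycentre characterisation of the circumcentre --- a genuine addition the paper does not reproduce. Your sharpness argument for \eqref{out-rad-ine-1} (taking $K=K'$) is also identical to the paper's.

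The genuine gap is in sharpness of \eqref{out-rad-ine-2}. Your degenerating-ellipsoid construction only certifies that the underlying Minkowski-sum bound $R_i(K+K')\ge\frac{1}{\sqrt2}(R_i(K)+R_i(K'))$ is sharp, i.e.\ the case $\varphi=\mathrm{id}$; as you yourself flag, it does not show that the constant $2\sqrt2\,\varphi^{-1}(1/2)$ is best possible for a general $\varphi\in\mathcal C$, because passing from $K+K'$ to $K+_\varphi K'$ via the containment is itself lossy unless $h_K=h_{K'}$ in the relevant directions. The paper closes this with a different, exact construction: for $2\le i\le n$ it takes
\[
K=[-e_1,e_1]+\sum_{k=i+1}^n[-e_k,e_k],\qquad K'=[-e_2,e_2]+\sum_{k=i+1}^n[-e_k,e_k],
\]
so that $R_i(K)=R_i(K')=1$, and proves directly that $R_i(K+_\varphi K')=\frac{\sqrt2}{2\varphi^{-1}(1/2)}$. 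The decisive extra ingredient is a lemma bounding the Orlicz sum of two \emph{orthogonal} unit segments,
\[
[-e_1,e_1]+_\varphi[-e_2,e_2]\ \subseteq\ \frac{\sqrt2}{2\varphi^{-1}(1/2)}\,B_{2,\tilde L},\qquad \tilde L=\mathrm{span}\{e_1,e_2\},
\]
which (together with $(K+_\varphi K')|L=K|L+_\varphi K'|L$) pins down the circumradius of the projection onto $\mathrm{span}\{e_1,\dots,e_i\}$ from above; a matching lower bound on every $i$-plane $L$ comes from exhibiting a point of $(K+_\varphi K')\cap L$ of norm at least $\frac{\sqrt2}{2\varphi^{-1}(1/2)}$. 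This yields exact equality in \eqref{out-rad-ine-2} for every $\varphi\in\mathcal C$, which your limiting argument does not.
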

We also prove that there is non-existence the reverse inequalities for the successive outer radius excepted $R_n$. That is
\begin{prop}\label{rev-out-rad-pro}
Let $\varphi\in\mathcal C$ and $K, K'\in \mathcal K^n_o$, for $i=1,\cdots n-1$, there exists no constant $c>0$ such that
\begin{align*}
  cR_i(K+_\varphi K')\leq R_i(K)+R_i(K').
\end{align*}
\end{prop}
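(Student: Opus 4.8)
The plan is to produce, for each fixed index $i\in\{1,\dots,n-1\}$ and each $\varphi\in\mathcal C$, a one-parameter family of pairs $(K_\varepsilon,K'_\varepsilon)$ in $\mathcal K^n_o\times\mathcal K^n_o$, $\varepsilon>0$, along which $R_i(K_\varepsilon)\to 0$ and $R_i(K'_\varepsilon)\to 0$ as $\varepsilon\to 0^+$, while $R_i(K_\varepsilon+_\varphi K'_\varepsilon)$ stays bounded below by a positive constant depending only on $n$ and $i$. Then $R_i(K_\varepsilon+_\varphi K'_\varepsilon)/\bigl(R_i(K_\varepsilon)+R_i(K'_\varepsilon)\bigr)\to\infty$, which rules out any constant $c>0$ as in the statement.

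The mechanism is that the Orlicz sum always contains the convex hull of the two bodies. Since $\varphi$ is increasing with $\varphi(0)=0$ and $\varphi(1)=1$, the value $\lambda=h_{K+_\varphi K'}(x)$, being the solution of $\varphi(h_K(x)/\lambda)+\varphi(h_{K'}(x)/\lambda)=1$, satisfies both $h_K(x)/\lambda\le 1$ and $h_{K'}(x)/\lambda\le 1$; hence $h_{K+_\varphi K'}\ge\max\{h_K,h_{K'}\}$, so $K+_\varphi K'\supseteq\mathrm{conv}(K\cup K')$. As circumradius and orthogonal projection are both monotone under inclusion, so is $R_i$, and it therefore suffices to keep $R_i\bigl(\mathrm{conv}(K_\varepsilon\cup K'_\varepsilon)\bigr)$ bounded below.

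Concretely, set $m=n-i+1\ge 2$ and take $K_\varepsilon=[-e_1,e_1]+\varepsilon B_n$ and $K'_\varepsilon=\mathrm{conv}\{\pm e_2,\dots,\pm e_m\}+\varepsilon B_n$, both of which contain the origin in their interiors. The projection of $K_\varepsilon$ onto the $i$-dimensional coordinate subspace $L=\mathrm{span}\{e_2,\dots,e_{i+1}\}$ sends the segment to the origin and equals the Euclidean $\varepsilon$-ball of $L$, so $R_i(K_\varepsilon)\le R(K_\varepsilon|L)=\varepsilon$; likewise the projection of $K'_\varepsilon$ onto $L'=\mathrm{span}\{e_1,e_{m+1},\dots,e_n\}$, a subspace of dimension $1+(n-m)=i$, sends the cross-polytope to the origin and equals the $\varepsilon$-ball of $L'$, so $R_i(K'_\varepsilon)\le\varepsilon$. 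On the other hand $\mathrm{conv}(K_\varepsilon\cup K'_\varepsilon)\supseteq Q_m:=\mathrm{conv}\{\pm e_1,\dots,\pm e_m\}$, and I claim $R_i(Q_m)\ge 1/\sqrt m$. Indeed, given any $i$-dimensional subspace $M$, the $m$ orthogonal projections of $e_1,\dots,e_m$ onto the $(m-1)$-dimensional space $M^\perp$ are linearly dependent, so $v=\sum_{k=1}^m c_k e_k$ lies in $M$ for some $c\neq 0$; with $\|v\|_1=\sum_{k=1}^m|c_k|$, the points $\pm v/\|v\|_1$ lie in $Q_m$ and are fixed by the projection onto $M$, whence $\mathrm{diam}(Q_m|M)\ge 2\|v\|_2/\|v\|_1\ge 2/\sqrt m$ and $R(Q_m|M)\ge 1/\sqrt m$. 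Minimizing over $M$ and combining with the inclusions above gives $R_i(K_\varepsilon+_\varphi K'_\varepsilon)\ge 1/\sqrt{n-i+1}$, while $R_i(K_\varepsilon)+R_i(K'_\varepsilon)\le 2\varepsilon$; letting $\varepsilon\to 0^+$ finishes the proof.

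The only genuinely non-routine point is the dimension bookkeeping. The cross-polytope $Q_m$ must span exactly $m=n-i+1$ coordinate axes: this is the least number that no $i$-dimensional projection can collapse to a point (which is what forces $R_i(Q_m)>0$), yet the axes must be distributed between $K_\varepsilon$ and $K'_\varepsilon$ so that each single body, after a suitable $i$-dimensional coordinate projection, fits into an $\varepsilon$-ball. Reconciling these two competing demands, together with the lower bound $R_i(Q_m)\ge 1/\sqrt m$ obtained from the linear-dependence argument, is where the work lies. It is also exactly here that the index $i=n$ has to be excluded: an $n$-dimensional projection cannot hide the long direction of $K_\varepsilon$, in agreement with the fact that the reverse inequality does survive for $R_n$.
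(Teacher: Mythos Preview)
Your proof is correct and rests on the same core idea as the paper's: split the $n-i+1$ coordinate directions between $K$ and $K'$ so that each body can be killed by a suitable $i$-dimensional projection, while together they span enough directions that no $i$-dimensional projection can collapse the sum. The paper takes $K=[-e_{n-i+1},e_{n-i+1}]$ and $K'=\sum_{k=1}^{n-i}[-e_k,e_k]$ directly, so that $R_i(K)=R_i(K')=0$ exactly, and then uses the inclusion $\frac{1}{2\varphi^{-1}(1/2)}(K+K')\subseteq K+_\varphi K'$ to conclude that the Orlicz sum contains an $(n-i+1)$-dimensional cube and hence has $R_i>0$.

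Your version differs in three respects, all of them refinements. First, by thickening with $\varepsilon B_n$ you keep the examples inside $\mathcal K^n_o$ (full-dimensional bodies containing the origin), whereas the paper's segments and lower-dimensional cubes are, strictly speaking, degenerate. Second, you invoke $\mathrm{conv}(K\cup K')\subseteq K+_\varphi K'$ rather than the scaled Minkowski sum; either inclusion does the job. Third, your linear-dependence argument gives the explicit lower bound $R_i(Q_m)\ge 1/\sqrt{n-i+1}$, while the paper is content with ``$>0$''. None of these changes the strategy, but your treatment is the more scrupulous one.
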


Similarly, for the successive inner radii of Orlicz Minkowski sum, we obtain
\begin{thm}\label{in-rad-ine-thm}
  Let $\varphi\in \mathcal C$ and $K, K'\in \mathcal K^n_o$. Then
  \begin{align}
    \label{inn-rad-ine-1}&2\varphi^{-1}(1/2)r_1(K+_\varphi K')\geq r_1(K)+r_1(K'),\\
    \label{inn-rad-ine-2}& 2\sqrt2\varphi^{-1}(1/2)r_i(K+_\varphi K')\geq r_i(K)+r_i(K'), \,\,\,\,\,\,\,\,\,\,\,i=1,\cdots, n-1.
  \end{align}
  All inequalities are best possible.
\end{thm}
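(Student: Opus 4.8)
The plan is to prove Theorem~\ref{in-rad-ine-thm} by squeezing $K+_\varphi K'$ between two dilates of the ordinary Minkowski sum $K+K'$ and then transferring the behaviour of the successive inner radii under Minkowski addition, in the spirit of \cite{gon-her-suc2012,gon-her-on2014}. \emph{For the comparison step}, fix $x\in\mathbb R^n$ and write $a=h_K(x)$, $b=h_{K'}(x)$, $\lambda=h_{K+_\varphi K'}(x)$, so that $\varphi(a/\lambda)+\varphi(b/\lambda)=1$. Since $\varphi\in\mathcal C$ is convex with $\varphi(0)=0$, it is superadditive, so $1=\varphi(a/\lambda)+\varphi(b/\lambda)\le\varphi\bigl((a+b)/\lambda\bigr)$ and hence $\lambda\le a+b$; convexity together with Jensen's inequality gives $\varphi\bigl((a+b)/(2\lambda)\bigr)\le\tfrac12\varphi(a/\lambda)+\tfrac12\varphi(b/\lambda)=\tfrac12$, so $\lambda\ge(a+b)/\bigl(2\varphi^{-1}(1/2)\bigr)$. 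Letting $x$ vary, these two bounds read
\begin{align*}
  \frac{1}{2\varphi^{-1}(1/2)}\,(K+K')\ \subseteq\ K+_\varphi K'\ \subseteq\ K+K'.
\end{align*}
Because each $r_i$ is monotone under inclusion and positively homogeneous of degree one, we obtain $r_i(K+_\varphi K')\ge\tfrac{1}{2\varphi^{-1}(1/2)}\,r_i(K+K')$, so Theorem~\ref{in-rad-ine-thm} is reduced to the Minkowski-addition estimates $r_1(K+K')\ge r_1(K)+r_1(K')$ and $r_i(K+K')\ge\tfrac{1}{\sqrt2}\bigl(r_i(K)+r_i(K')\bigr)$ for $1\le i\le n-1$.

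\emph{For the factor $\sqrt2$}, let $B\subseteq K$ and $B'\subseteq K'$ be inscribed $i$-balls of radii $r_i(K)$ and $r_i(K')$, lying in affine planes $x+L$ and $x'+L'$; then $B+B'\subseteq K+K'$. Using the principal vectors $u_1,\dots,u_i\in L$ and $u_1',\dots,u_i'\in L'$ of the pair $(L,L')$, the vectors $w_j$ obtained by normalising $r_i(K)u_j+r_i(K')u_j'$ are orthonormal, and because all principal angles are acute one checks that the $i$-ball of radius $\sqrt{r_i(K)^2+r_i(K')^2}$ in $\mathrm{span}\{w_1,\dots,w_i\}$ (centred at the sum of the centres of $B$ and $B'$) decomposes as $p+q$ with $p\in B$ and $q\in B'$; for $i=1$ this is simply the parallelogram law applied to the two diagonals of the parallelogram $B+B'$. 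Hence $r_i(K+K')\ge\sqrt{r_i(K)^2+r_i(K')^2}\ge\tfrac{1}{\sqrt2}\bigl(r_i(K)+r_i(K')\bigr)$, which with the comparison step proves \eqref{inn-rad-ine-2}; equality should be approached by taking $K$ and $K'$ to be thin thickenings of equal balls lying in orthogonal subspaces, so that all principal angles are right angles and the bound is best possible.

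\emph{For the case $i=1$ of \eqref{inn-rad-ine-1} and the extremizers}, since $2r_1=D$ the desired inequality is $2\varphi^{-1}(1/2)\,D(K+_\varphi K')\ge D(K)+D(K')$, with the factor $2\varphi^{-1}(1/2)$ rather than the $2\sqrt2\,\varphi^{-1}(1/2)$ produced above. The extremal case $K=K'=B_n$, for which $K+_\varphi K=(1/\varphi^{-1}(1/2))B_n$ and hence all the inequalities become equalities, pins down this constant; to prove the inequality in general I would estimate the width function of $K+_\varphi K'$ directly from the pointwise bound $h_{K+_\varphi K'}(u)\ge(h_K(u)+h_{K'}(u))/(2\varphi^{-1}(1/2))$ at a direction simultaneously adapted to the diametral chords of $K$ and of $K'$. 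The main obstacle, as I see it, is precisely this last point — obtaining the sharp constant in the case $i=1$, where the diameter is not superadditive under Minkowski addition and the generic argument only yields the factor $\sqrt2$ — together with producing matching extremizers for every inequality; the comparison step and the $\sqrt2$ estimate are otherwise routine.
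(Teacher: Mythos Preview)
Your difficulty with the $r_1$ case is not a gap in your argument but a typo in the stated theorem: inequality~\eqref{inn-rad-ine-1} should concern $r_n$, not $r_1$. This is evident from the paper's own proof, which derives \eqref{inn-rad-ine-1} from the inradius superadditivity $r_n(K+K')\ge r_n(K)+r_n(K')$, and from the parallelism with Theorem~\ref{out-rad-ine-thm}, where the sharper constant $2\varphi^{-1}(1/2)$ attaches to the extreme index ($R_1=\omega/2$ there, $r_n=r$ here). The version with $r_1$ is in fact false: for $K=[-e_1,e_1]$ and $K'=[-e_2,e_2]$, Lemma~\ref{orl-add-lem} gives $K+_\varphi K'\subseteq\frac{\sqrt2}{2\varphi^{-1}(1/2)}B_{2,\tilde L}$, so $2\varphi^{-1}(1/2)\,r_1(K+_\varphi K')\le\sqrt2<2=r_1(K)+r_1(K')$. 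So there was no hope of removing that extra $\sqrt2$.

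With the typo corrected, your comparison step $\frac{1}{2\varphi^{-1}(1/2)}(K+K')\subseteq K+_\varphi K'$ combined with the cited Minkowski-sum estimates from \cite{gon-her-suc2012} is exactly the paper's proof of the inequalities, and $K=K'$ is also the paper's extremiser for \eqref{inn-rad-ine-1}. For the sharpness of \eqref{inn-rad-ine-2} the paper does essentially what you sketch but works it out explicitly: it takes $K=B_{i,\tilde L_0}$ and $K'=B_{i,\tilde L'_0}$, two $i$-balls in suitably overlapping $i$-planes, and uses Lemma~\ref{orl-add-lem} to show that every $i$-section of $B_{i,\tilde L_0}+_\varphi B_{i,\tilde L'_0}$ through the origin contains a boundary point of norm at most $\frac{\sqrt2}{2\varphi^{-1}(1/2)}$, forcing $r_i=\frac{\sqrt2}{2\varphi^{-1}(1/2)}$. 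Your principal-angles derivation of the $\sqrt2$ bound for $r_i(K+K')$ is not needed here since the paper simply cites \cite{gon-her-suc2012}, but your ``thin thickenings'' remark is well taken if one insists on nonempty interior.
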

The analogous Proposition \ref{rev-out-rad-pro} for the successive inner radii are
\begin{prop}
Let $\varphi\in\mathcal C$ and $K, K'\in \mathcal K^n_o$, for $i=2,\cdots,n$, there exists no constant $c>0$ such that
\begin{align*}
  cr_i(K+_\varphi K')\leq r_i(K)+r_i(K').
\end{align*}
\end{prop}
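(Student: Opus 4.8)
The plan is to combine two ingredients: a containment that reduces the Orlicz sum to the ordinary Minkowski sum, and an explicit one-parameter family of pairs $(K_\varepsilon,K'_\varepsilon)$ of convex bodies along which the quotient $\bigl(r_i(K_\varepsilon)+r_i(K'_\varepsilon)\bigr)/r_i(K_\varepsilon+_\varphi K'_\varepsilon)$ tends to $0$. This is the inner-radii analogue of the construction behind Proposition~\ref{rev-out-rad-pro}.

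First I would establish the inclusion $K+_\varphi K'\supseteq c_\varphi\,(K+K')$ for all $K,K'\in\mathcal K^n_o$, where $c_\varphi:=\bigl(2\varphi^{-1}(1/2)\bigr)^{-1}$. Indeed, for a fixed $u$ put $\lambda=h_{K+_\varphi K'}(u)$, so that $\varphi(h_K(u)/\lambda)+\varphi(h_{K'}(u)/\lambda)=1$; convexity of $\varphi$ gives $\varphi\!\bigl(\tfrac{h_K(u)+h_{K'}(u)}{2\lambda}\bigr)\le\tfrac12$, hence $h_{K+_\varphi K'}(u)\ge c_\varphi\bigl(h_K(u)+h_{K'}(u)\bigr)=c_\varphi\,h_{K+K'}(u)$, and passing from support functions to bodies yields the inclusion. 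Since $\varphi$ is convex with $\varphi(0)=0$ and $\varphi(1)=1$ one has $\varphi(t)\le t$ on $[0,1]$, so $\varphi^{-1}(1/2)\in[\tfrac12,1)$ and $c_\varphi\ge\tfrac12$ is a fixed positive constant. Because $r_i$ is monotone under inclusion and positively homogeneous, the inclusion yields $r_i(K+_\varphi K')\ge c_\varphi\,r_i(K+K')$ for all $K,K'\in\mathcal K^n_o$.

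Next, for $i\in\{2,\dots,n\}$ and $\varepsilon\in(0,1)$ I would take, with coordinates referred to $e_1,\dots,e_n$,
\[
 K_\varepsilon=B_{i-1}\times[-\varepsilon,\varepsilon]^{\,n-i+1},\qquad
 K'_\varepsilon=[-\varepsilon,\varepsilon]^{\,i-1}\times[-1,1]\times[-\varepsilon,\varepsilon]^{\,n-i},
\]
where $B_{i-1}$ is the Euclidean unit ball of $\mathrm{span}\{e_1,\dots,e_{i-1}\}$; both belong to $\mathcal K^n_o$. On the one hand, adding points of $K_\varepsilon$ supported on the first $i-1$ coordinates to the segment $[-1,1]e_i\subset K'_\varepsilon$ shows $K_\varepsilon+K'_\varepsilon\supseteq B_{i-1}\times[-1,1]\times\{0\}^{\,n-i}$, and the right-hand side contains the Euclidean unit ball of the $i$-plane $\mathrm{span}\{e_1,\dots,e_i\}$; hence $r_i(K_\varepsilon+K'_\varepsilon)\ge1$, so $r_i(K_\varepsilon+_\varphi K'_\varepsilon)\ge c_\varphi>0$. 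On the other hand I would use the elementary bound: if a convex body lies in the $\delta$-neighbourhood of an affine subspace $A$ with $\dim A\le i-1$, then its $i$-th inner radius is at most $\delta$. (Proof: if an $i$-ball of radius $\rho$, carried by the $i$-plane $H$, lies in the body, choose $v\in H\cap A^{\perp}$ with $|v|=\rho$ --- possible since $\dim(H\cap A^{\perp})\ge i+(n-i+1)-n=1$; then $p\pm v$ lie within distance $\delta$ of $A$, so after orthogonal projection onto $A^{\perp}$, where $A$ becomes a single point, the images of $p\pm v$ lie in a $\delta$-ball while their difference is $2v$, whence $2\rho=|2v|\le2\delta$.) Since $K_\varepsilon$ lies in the $\sqrt{n-i+1}\,\varepsilon$-neighbourhood of $\mathrm{span}\{e_1,\dots,e_{i-1}\}$ and $K'_\varepsilon$ in the $\sqrt{n-1}\,\varepsilon$-neighbourhood of the line $\mathbb R e_i$ --- whose dimension $1$ satisfies $1\le i-1$ because $i\ge2$ --- it follows that $r_i(K_\varepsilon)+r_i(K'_\varepsilon)\le(\sqrt{n-i+1}+\sqrt{n-1})\,\varepsilon$. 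Consequently, were there a constant $c>0$ with $c\,r_i(K+_\varphi K')\le r_i(K)+r_i(K')$ for all $K,K'\in\mathcal K^n_o$, then applying it to $K_\varepsilon,K'_\varepsilon$ would give $c\,c_\varphi\le(\sqrt{n-i+1}+\sqrt{n-1})\,\varepsilon$ for every $\varepsilon\in(0,1)$, which is impossible.

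The one genuinely delicate point, I expect, is the upper bound $r_i(K_\varepsilon),r_i(K'_\varepsilon)=O(\varepsilon)$ \emph{valid for every} $i\in\{2,\dots,n\}$: for $i<n$ these bodies, although contained in a slab of width $O(\varepsilon)$, are not thin in a single direction only, and a slab of small width controls merely the inradius $r_n$, not $r_i$; the smallness of $r_i$ genuinely requires $K_\varepsilon$ to be thin in $n-i+1$ independent directions, which is exactly what the neighbourhood bound above captures. Only in the extreme case $i=n$ does the argument collapse to the trivial $r_n(K_\varepsilon)=r(K_\varepsilon)\le\varepsilon$, where it merely records the familiar fact that Minkowski addition can create a large inradius out of two nearly flat summands. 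The remaining steps --- the containment inequality, the positivity of $c_\varphi$, and the inclusion $K_\varepsilon+K'_\varepsilon\supseteq B_{i-1}\times[-1,1]\times\{0\}^{\,n-i}$ --- are routine.
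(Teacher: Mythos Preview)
Your proof is correct, and the underlying idea --- use the containment $K+_\varphi K'\supseteq c_\varphi(K+K')$ together with a pair of bodies that are individually ``thin in $i$ directions'' but whose Minkowski sum contains a full $i$-ball --- is exactly the paper's. The execution, however, differs. The paper simply takes the degenerate bodies $K=[-e_1,e_1]$ and $K'=\sum_{k=2}^{i}[-e_k,e_k]$, for which $r_i(K)=r_i(K')=0$ outright, while $K+K'$ is an $i$-cube; this dispatches the claim in two lines without any limiting argument. Your construction instead keeps the bodies full-dimensional via $\varepsilon$-thickenings and then lets $\varepsilon\to0$, which forces you to prove the auxiliary estimate $r_i(\,\cdot\,)\le\delta$ for bodies lying in a $\delta$-neighbourhood of an $(i-1)$-flat. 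What this buys you is that your examples genuinely lie in $\mathcal K^n_o$ as the paper defines it (compact convex sets with \emph{nonempty interior} containing the origin), whereas the paper's own segments and lower-dimensional cubes, strictly speaking, do not; your version is therefore the more careful one, at the price of a slightly longer argument. One cosmetic point: in your neighbourhood lemma you write ``choose $v\in H\cap A^{\perp}$'', but since $H$ is an affine $i$-plane you really mean the linear direction space of $H$ intersected with the linear direction space orthogonal to $A$; making this explicit would remove the only notational ambiguity.
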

If we take $\varphi(t)=t^p$, $p\geq 1,$ these results are proved by Gonz\'alez and Hern\'{a}ndez Cifre
(see \cite{gon-her-on2014}). Specially, if $p=1$, it was shown in \cite{gon-her-suc2012}.

The second part of our result  is regard the Orlicz difference body. We obtain the following:
\begin{thm}
  Let $\varphi\in \mathcal C$, and $K\in \mathcal K^n_o$, for all $i=1,2\cdots, n$, then
\begin{align}\
\label{out-rad-ine-dif-1} \frac{\sqrt 2}{2\varphi^{-1}(1/2)}\sqrt{\frac{i+1}{i}}R_i(K)\leq R_i(K+_\varphi (-K)) \leq 2R_i(K),\\
\label{inn-rad-ine-dif-1}\frac{1}{\varphi^{-1}(1/2)}r_i(K)\leq r_i(K+_\varphi (-K))<2(i+1)r_i(K).
\end{align}
\end{thm}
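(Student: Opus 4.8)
The plan is to deduce all four inequalities from the already–established Theorems \ref{out-rad-ine-thm} and \ref{in-rad-ine-thm} together with the classical behavior of the successive radii under the ordinary Minkowski difference body $K + (-K) = DK$. Recall the known estimates (see \cite{gon-her-suc2012,gon-her-on2014} and the references therein): for every $K\in\mathcal K^n_o$ and all $i=1,\dots,n$,
\begin{align*}
  \sqrt{\tfrac{i+1}{i}}\,R_i(K)\le R_i(DK)\le 2R_i(K),\qquad
  2\,r_i(K)\le r_i(DK)< 2(i+1)\,r_i(K),
\end{align*}
where the left inequality for $R_i$ uses that the circumradius of an $i$-dimensional symmetric projection is controlled by $\sqrt{(i+1)/i}$ times the circumradius of the original projection (a Jung-type bound exploiting central symmetry), and the right inequality $R_i(DK)\le 2R_i(K)$ is immediate from $R_i$ being sublinear under Minkowski addition and $R_i(-K)=R_i(K)$. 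The inner-radius chain is the analogous statement for $r_i$, the strict upper bound $r_i(DK)<2(i+1)r_i(K)$ coming from the fact that a symmetric body cannot be too "fat'' in an $i$-plane relative to $K$ itself.

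The first step is to compare $K +_\varphi (-K)$ with $DK = K + (-K)$ directly at the level of support functions. Since $\varphi\in\mathcal C$ is convex with $\varphi(0)=0$, for any $a,b\ge 0$ and $\lambda>0$ one has $\varphi\big(\tfrac{a+b}{2\lambda}\big)\le \tfrac12\varphi(\tfrac a\lambda)+\tfrac12\varphi(\tfrac b\lambda)$, and from the defining infimum for $h_{K+_\varphi(-K)}$ one extracts the two-sided containment
\begin{align*}
  \varphi^{-1}(1/2)\,\big(K+(-K)\big)\ \subseteq\ K+_\varphi(-K)\ \subseteq\ K+(-K),
\end{align*}
the right inclusion because $h_K,h_{-K}\le h_{DK}$ forces $\varphi(h_K/h_{DK})+\varphi(h_{-K}/h_{DK})\le 1$ pointwise on the support of $h_{DK}$ (using $h_K+h_{-K}=h_{DK}$ and convexity), and the left inclusion because at $\lambda = \varphi^{-1}(1/2)\,h_{DK}$ one has $\varphi(h_K/\lambda)+\varphi(h_{-K}/\lambda)\ge \varphi\big(\tfrac{h_{DK}}{2\lambda}\big)\cdot 2$... more carefully, one shows $h_{K+_\varphi(-K)}\ge \varphi^{-1}(1/2)\,h_{DK}$ by testing the infimum. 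Because $R_i$ and $r_i$ are monotone under inclusion and positively homogeneous, this containment transfers to
\begin{align*}
  \varphi^{-1}(1/2)\,R_i(DK)\le R_i(K+_\varphi(-K))\le R_i(DK),
\end{align*}
and likewise for $r_i$; combining with the classical bounds for $DK$ recalled above then yields
\begin{align*}
  \varphi^{-1}(1/2)\sqrt{\tfrac{i+1}{i}}\,R_i(K)\le R_i(K+_\varphi(-K))\le 2R_i(K),\quad
  2\varphi^{-1}(1/2)\,r_i(K)\le r_i(K+_\varphi(-K))<2(i+1)r_i(K).
\end{align*}

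This is \emph{not} yet the stated theorem: the target constants are $\tfrac{\sqrt2}{2\varphi^{-1}(1/2)}$ and $\tfrac{1}{\varphi^{-1}(1/2)}$ on the left, which are \emph{smaller} than $\varphi^{-1}(1/2)$-type constants when $\varphi^{-1}(1/2)\le \tfrac{1}{\sqrt2}$ (true since $\varphi$ convex with $\varphi(1)=1$ gives $\varphi^{-1}(1/2)\ge 1/2$, and it can be as small as... one must check the direction). So the second, and main, step is to instead route the lower bounds through Theorems \ref{out-rad-ine-thm} and \ref{in-rad-ine-thm} with $K' = -K$: inequality \eqref{out-rad-ine-2} gives $2\sqrt2\,\varphi^{-1}(1/2)R_i(K+_\varphi(-K))\ge R_i(K)+R_i(-K)=2R_i(K)$, i.e. $R_i(K+_\varphi(-K))\ge \tfrac{1}{\sqrt2\,\varphi^{-1}(1/2)}R_i(K)$, and then one sharpens the numerator from $2R_i(K)$ to $\sqrt{(i+1)/i}\cdot\sqrt2\cdot R_i(K)$... this is where the factor $\sqrt{\tfrac{i+1}{i}}$ must come from, namely by using the \emph{symmetry} of $K+_\varphi(-K)$ (it is an $o$-symmetric body) to invoke the Jung-type improvement $R_i(M)\le \sqrt{\tfrac{i}{i+1}}\,\cdot(\text{something})$ on symmetric $M$ rather than the crude factor. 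The inner-radius lower bound $\tfrac{1}{\varphi^{-1}(1/2)}r_i(K)$ comes the same way from \eqref{inn-rad-ine-2} combined with $r_i(K+_\varphi(-K))\ge 2\varphi^{-1}(1/2)\cdots$ — but one gains back a factor $\sqrt2$ by symmetry, since for an $i$-ball inscribed in a symmetric body one may center it at the origin. The upper bounds $\le 2R_i(K)$ and $<2(i+1)r_i(K)$ are the easy direction, following from $K+_\varphi(-K)\subseteq K+(-K)=DK$ (the right inclusion above) and the classical $DK$ estimates, with the strict inequality inherited from the strict classical bound $r_i(DK)<2(i+1)r_i(K)$.

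The hard part will be pinning down the \emph{exact} constants, i.e. squeezing out the improvement factor $\sqrt{(i+1)/i}$ and the $\sqrt2$: one must be careful to apply the symmetric Jung bound to $K+_\varphi(-K)$ (legitimate, since it is $o$-symmetric) rather than to $K$, and to track how this interacts with the $2\sqrt2\,\varphi^{-1}(1/2)$ constant from \eqref{out-rad-ine-2}; a naive combination loses the sharpness. I also expect the strict inequality in \eqref{inn-rad-ine-dif-1} to require an extra argument ruling out the degenerate equality case, analogous to the $L_p$ case in \cite{gon-her-on2014}, where one notes that equality in $r_i(DK)=2(i+1)r_i(K)$ would force $K$ to be a simplex-like extremizer that is incompatible with the strict convexity built into $\varphi\in\mathcal C$.
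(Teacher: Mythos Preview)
Your overall strategy---sandwich $K+_\varphi(-K)$ between scalar multiples of the ordinary difference body $DK=K+(-K)$ and then invoke the classical $DK$ radii bounds---is exactly the paper's approach, and when executed correctly it proves the theorem in one step. The ``second, and main, step'' you propose (routing through Theorems~\ref{out-rad-ine-thm}/\ref{in-rad-ine-thm} and then trying to recover a $\sqrt{(i+1)/i}$ factor from a Jung-type symmetry argument) is entirely unnecessary, and you only ended up there because of two numerical errors in the first step.

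First, the left containment constant is wrong. The convexity argument you sketch actually gives
\[
2\varphi\!\left(\frac{h_{DK}(u)}{2\,h_{K+_\varphi(-K)}(u)}\right)\le
\varphi\!\left(\frac{h_K(u)}{h_{K+_\varphi(-K)}(u)}\right)+\varphi\!\left(\frac{h_{-K}(u)}{h_{K+_\varphi(-K)}(u)}\right)=1,
\]
hence $h_{K+_\varphi(-K)}(u)\ge \dfrac{1}{2\varphi^{-1}(1/2)}\,h_{DK}(u)$, not $\varphi^{-1}(1/2)\,h_{DK}(u)$; this is precisely Theorem~\ref{lin-rel-thm}(ii). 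Second, the classical lower bound for the outer radii of $DK$ that the paper quotes (from \cite{gon-her-suc2012}) is $R_i(DK)\ge \sqrt{2}\,\sqrt{\tfrac{i+1}{i}}\,R_i(K)$, with an extra $\sqrt{2}$ compared to what you wrote. With these two corrections, the combination is immediate:
\[
R_i(K+_\varphi(-K))\ge \frac{1}{2\varphi^{-1}(1/2)}R_i(DK)\ge \frac{\sqrt{2}}{2\varphi^{-1}(1/2)}\sqrt{\tfrac{i+1}{i}}\,R_i(K),
\]
and on the inner side $r_i(K+_\varphi(-K))\ge \dfrac{1}{2\varphi^{-1}(1/2)}r_i(DK)\ge \dfrac{2}{2\varphi^{-1}(1/2)}r_i(K)=\dfrac{1}{\varphi^{-1}(1/2)}r_i(K)$. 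The upper bounds follow from $K+_\varphi(-K)\subseteq DK$ as you said. So delete the second step altogether and fix the two constants.
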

If we take $\varphi(t)=t^p$, $p\geq 1$, these results are proved by Gonz\'alez and Hern\'{a}ndez Cifre
(see \cite{gon-her-suc2012, gon-her-on2014}).

The paper is organized as follows. In section 1, we introduce the Orlicz Minkowski sum and show some of their properties. The proof of the results of successive outer and inner radii for Orlicz Minkowski sum are given in Section 2. Section 3 deals with the successive radii for Orlicz difference body.

\section{Orlicz Minkowski addition}
In this section, some basic definitions and notations about Orlicz Minkowski sum and some of their properties are introduced.

Let $\varphi\in \mathcal C$, $x=(x_1,x_2,\cdots,x_n)\in \mathbb R^n$, the Orlicz norm $\lVert x\rVert _\varphi$ of a point $x\in \mathbb R^n$ is defined as
\begin{align}\label{orl-nor-for}
\lVert x\rVert_\varphi=\inf\left\{\lambda>0: (x_1,x_2,\cdots,x_n)\in \mathbb R^n, \sum^{n}_{i=1}\varphi\left(\frac{|x_i|}{\lambda}\right)\leq 1\right\}.
\end{align}
Note that, if take $\varphi(t)=t^p$, then $\rVert \cdot\lVert_\varphi$ is precisely the $L_p$ norm $\rVert \cdot\lVert_p$, if $p=2$, it is the Euclidean norm $\rVert \cdot\lVert_2$.

Let  $x\in \mathbb R^n$, then
\begin{align*}
  \lVert x\rVert_\varphi\geq 0,
\end{align*}
and for $c>0$,  we have
\begin{align*}
  \lVert cx\rVert_\varphi=c \lVert x\rVert_\varphi.
\end{align*}

The Orlicz ball is defined as
\begin{align*}
  B^\varphi_n=\left\{x=(x_1,\cdots, x_n)\in\mathbb R^n: \lVert x\rVert_\varphi\leq 1\right\}.
\end{align*}
We have the following Lemma.
\begin{lem}\label{orl-nom-ineq}
Let $\varphi_1, \varphi_2\in \mathcal C$, $x\in \mathbb R^n$. If $\varphi_1\leq\varphi_2$, then
\begin{align*}
\lVert x\rVert_{\varphi_1}\leq \lVert x\rVert_{\varphi_2},\,\,\,\,\,\,and\,\,\,\,\,\,\,\, B^{\varphi_2}_n\subseteq B^{\varphi_1}_n.
\end{align*}
\end{lem}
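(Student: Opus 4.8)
The plan is to work directly from the definition~\eqref{orl-nor-for}. First I would dispose of the trivial case $x=0$, where $\lVert x\rVert_{\varphi_1}=\lVert x\rVert_{\varphi_2}=0$ and both Orlicz balls contain the origin, so both assertions are immediate. Assume now $x\neq 0$ and set, for $\varphi\in\mathcal C$,
\[
  F_\varphi(\lambda)=\sum_{i=1}^{n}\varphi\!\left(\frac{|x_i|}{\lambda}\right),\qquad \lambda>0,
\]
together with the feasible set $S_\varphi=\{\lambda>0:F_\varphi(\lambda)\le 1\}$, so that $\lVert x\rVert_\varphi=\inf S_\varphi$.

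The key observation is the pointwise comparison: since $\varphi_1\le\varphi_2$ on $[0,\infty)$, we get $F_{\varphi_1}(\lambda)\le F_{\varphi_2}(\lambda)$ for every $\lambda>0$, hence $S_{\varphi_2}\subseteq S_{\varphi_1}$. Taking infima over these nested sets yields $\lVert x\rVert_{\varphi_1}=\inf S_{\varphi_1}\le\inf S_{\varphi_2}=\lVert x\rVert_{\varphi_2}$, which is the first assertion. Before doing this one should check that $S_{\varphi_2}\neq\varnothing$, so that the infimum is a genuine nonnegative real number: each $\varphi\in\mathcal C$ is convex with $\varphi(0)=0$, so $\varphi(t)\le t\varphi(1)=t$ on $[0,1]$ and in particular $\varphi(t)\to 0$ as $t\to 0^{+}$, whence $F_\varphi(\lambda)\to 0$ as $\lambda\to\infty$ and large $\lambda$ lie in $S_\varphi$.

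For the inclusion of balls, let $x\in B^{\varphi_2}_n$, i.e.\ $\lVert x\rVert_{\varphi_2}\le 1$. By the norm inequality just proved, $\lVert x\rVert_{\varphi_1}\le\lVert x\rVert_{\varphi_2}\le 1$, so $x\in B^{\varphi_1}_n$; since $x$ was arbitrary, $B^{\varphi_2}_n\subseteq B^{\varphi_1}_n$.

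The argument is essentially a monotonicity comparison and presents no serious obstacle; the only points requiring care are the degenerate case $x=0$ and the verification that the defining infimum is taken over a nonempty set (equivalently, that $\lVert\cdot\rVert_\varphi$ is finite), for which the normalization $\varphi(0)=0$ together with convexity of $\varphi$ suffices. If one wishes, one can additionally note that $\lambda\mapsto F_\varphi(\lambda)$ is continuous and nonincreasing, so that $S_\varphi$ is the interval $[\lVert x\rVert_\varphi,\infty)$ and $F_\varphi(\lVert x\rVert_\varphi)=1$ for $x\neq 0$, but this refinement is not needed for the present statement.
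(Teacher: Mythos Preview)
Your proof is correct and follows essentially the same monotonicity idea as the paper: compare the defining sums $\sum_i\varphi(|x_i|/\lambda)$ under the hypothesis $\varphi_1\le\varphi_2$ and conclude the norm inequality, then deduce the ball inclusion from it. The only minor difference is packaging: the paper first identifies $\lVert x\rVert_{\varphi}$ as the unique $\lambda$ with $\sum_i\varphi(|x_i|/\lambda)=1$ (using strict monotonicity) and then compares at that point, whereas you argue via the inclusion $S_{\varphi_2}\subseteq S_{\varphi_1}$ of feasible sets and take infima directly; your route is slightly cleaner in that it does not require the equality characterization and it handles the edge cases ($x=0$, nonemptiness of $S_\varphi$) explicitly.
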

\begin{proof}
Let $x\in \mathbb R^n$, set $\lVert x\rVert_{\varphi_i}=\lambda_i$ $(i=1,\,2)$, by the definition (\ref{orl-nor-for}), we have $
\sum^{n}_{i=1}\varphi_i\left(\frac{|x_i|}{\lambda_i}\right)\leq 1.$
Since $\varphi$ is strictly increasing, then $\lambda\rightarrow \sum^{n}_{i=1}\varphi\left(\frac{|x_i|}{\lambda}\right)$ is strictly decreasing, so $\lVert x\rVert_\varphi=\lambda_i$ if and only if
$$\sum^{n}_{i=1}\varphi_i\left(\frac{|x_i|}{\lambda_i}\right)=1.$$
Note that $\varphi_1\leq \varphi_2$, then
$$\sum^{n}_{i=1}\varphi_2\left(\frac{|x_i|}{\lambda_1}\right)\geq1.$$
By the definition of $\lVert x \rVert_\varphi$ we have
$\lVert x \rVert_{\varphi_1}\leq \lVert x \rVert_{\varphi_2}$.

   Let $x\in \partial B^{\varphi_2}_n$, then
  $\lVert x\rVert_{\varphi_2}=1,$ by (\ref{orl-nom-ineq}), we have
  \begin{align*}
    \lVert x\rVert_{\varphi_1}\leq \lVert x\rVert_{\varphi_2}=1.
  \end{align*}
Then we have $x\in B^{\varphi_1}_n$, so we complete the proof.
    \end{proof}
Let $K,L\in\mathcal K^n_o$, $\varphi\in \mathcal C$, the Orlicz Minkowski sum of $K$ and $L$ is the convex body $K+_\varphi L$, with support function
\begin{align*}
  h_{ K+_\varphi L}(x)=\inf\left\{\lambda>0:\varphi\left(\frac{h_K(x)}{\lambda}\right)+\varphi\left(\frac{h_L(x)}{\lambda}\right)\leq 1\right\}.
\end{align*}
Since $\varphi$  is strictly increasing, then \begin{align*}\lambda\rightarrow\varphi\left(\frac{h_K(x)}{\lambda}\right)+\varphi\left(\frac{h_L(x)}{\lambda}\right),
 \end{align*}
  is strictly decreasing. So, equivalently,
  $h_{K+_\varphi  L}(u_0)=\lambda_0$  if and only if
 \begin{align}\label{equ-orl-con-add}
 \varphi\left(\frac{h_K(u_0)}{\lambda_0}\right)+\varphi\left(\frac{h_L(u_0)}{\lambda_0}\right)=1.
 \end{align}
For the body $K+_\varphi L$, we have the following results.
\begin{thm}\label{lin-rel-thm}
Let $\varphi$ and $\varphi_1, \varphi_2$  in $\mathcal C$, for $K, L\in \mathcal K^n_o$, then\\
(\romannumeral 1 ). If $\varphi_1\leq\varphi_2$  for all $x\in[0,1]$,  then $K+_{\varphi_1}L\subseteq K+_{\varphi_2}L, $\\
(\romannumeral 2). For $\varphi\in \mathcal C$, then $\frac{1}{2\varphi^{-1}(1/2)}K+L\subseteq K+_\varphi L\subseteq K+L,$\\
(\romannumeral 3). $conv(K\cup L)\subseteq K+_\varphi L,$\\
(\romannumeral 4). $K+_\varphi L \subseteq \frac{1}{\varphi^{-1}(1/2)}conv(K\cup L) .$
\end{thm}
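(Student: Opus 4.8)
The plan is to prove the four inclusions by working directly from the defining relation \eqref{equ-orl-con-add} for support functions, reducing everything to elementary monotonicity facts about $\varphi$ and the scaling identity $h_{cK}=c\,h_K$.

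For (i), fix $x\in\mathbb R^n$ and set $\lambda_j=h_{K+_{\varphi_j}L}(x)$ for $j=1,2$. By \eqref{equ-orl-con-add} we have $\varphi_1(h_K(x)/\lambda_1)+\varphi_1(h_L(x)/\lambda_1)=1$. Since $\varphi_1\le\varphi_2$ on $[0,1]$ (and the relevant arguments $h_K(x)/\lambda_1$, $h_L(x)/\lambda_1$ lie in $[0,1]$ because $h_K,h_L\le h_{K+L}$ and one checks $\lambda_1\ge$ the corresponding coordinates — this is essentially part (ii)), it follows that $\varphi_2(h_K(x)/\lambda_1)+\varphi_2(h_L(x)/\lambda_1)\ge1$. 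Because $\mu\mapsto\varphi_2(h_K(x)/\mu)+\varphi_2(h_L(x)/\mu)$ is strictly decreasing and equals $1$ at $\mu=\lambda_2$, we conclude $\lambda_2\ge\lambda_1$, i.e. $h_{K+_{\varphi_1}L}\le h_{K+_{\varphi_2}L}$, which gives the inclusion. The same monotone-comparison mechanism drives (ii): the right inclusion $K+_\varphi L\subseteq K+L$ comes from comparing $\varphi(a+b)\le\varphi(a)+\varphi(b)$-type convexity (using $\varphi(0)=0$, so $\varphi$ is superadditive in the sense $\varphi(ta)\le t\varphi(a)$ for $t\in[0,1]$) to see that $\lambda=h_K(x)+h_L(x)$ already satisfies $\varphi(h_K/\lambda)+\varphi(h_L/\lambda)\le1$; the left inclusion follows by evaluating the defining relation at $\lambda=\frac{1}{2\varphi^{-1}(1/2)}(h_K(x)+h_L(x))$ and using convexity of $\varphi$ to bound $\varphi(h_K/\lambda)+\varphi(h_L/\lambda)$ from above by $1$.

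For (iii), note $h_{\mathrm{conv}(K\cup L)}(x)=\max\{h_K(x),h_L(x)\}$; without loss of generality say $h_K(x)\ge h_L(x)$ and set $\mu=h_K(x)$. Then $\varphi(h_K(x)/\mu)+\varphi(h_L(x)/\mu)=\varphi(1)+\varphi(h_L(x)/\mu)=1+\varphi(h_L(x)/\mu)\ge1$, so by the strict-decrease property $h_{K+_\varphi L}(x)\ge\mu=h_{\mathrm{conv}(K\cup L)}(x)$, giving the inclusion. For (iv), with $\mu=\max\{h_K(x),h_L(x)\}$ and $\lambda=\mu/\varphi^{-1}(1/2)$ we get $h_K(x)/\lambda\le\varphi^{-1}(1/2)$ and likewise for $L$, so $\varphi(h_K/\lambda)+\varphi(h_L/\lambda)\le2\varphi(\varphi^{-1}(1/2))=1$; hence $h_{K+_\varphi L}(x)\le\lambda=\frac{1}{\varphi^{-1}(1/2)}h_{\mathrm{conv}(K\cup L)}(x)$.

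The main obstacle is the bookkeeping in (ii), and in particular making sure the arguments fed into $\varphi$ stay in $[0,1]$ so that the convexity inequalities and the hypothesis "$\varphi_1\le\varphi_2$ on $[0,1]$" in (i) actually apply; one must first establish $h_K(x),h_L(x)\le h_{K+_\varphi L}(x)$ (equivalently $K,L\subseteq K+_\varphi L$, which follows from (iii)) before the comparison arguments are legitimate. Once the left inclusion of (ii) is in hand — which is the only place genuine convexity of $\varphi$ (not just monotonicity) is used — the rest is a routine application of the "plug in a candidate $\lambda$ and check the defining inequality" principle together with strict monotonicity of $\mu\mapsto\varphi(h_K(x)/\mu)+\varphi(h_L(x)/\mu)$.
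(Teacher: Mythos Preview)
Your approach is essentially the same as the paper's: all four parts are handled via the defining relation \eqref{equ-orl-con-add}, the strict monotonicity of $\mu\mapsto\varphi(h_K/\mu)+\varphi(h_L/\mu)$, and the convexity/normalisation of $\varphi$. Parts (i), (iii), (iv) match the paper almost verbatim; for the right inclusion in (ii) the paper phrases it as ``$\varphi\le\mathrm{Id}$ on $[0,1]$, then apply (i)'', which is exactly your $\varphi(t)\le t$ computation recast.

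There is one directional slip to fix. In the left inclusion of (ii) you write that plugging in $\lambda=\frac{1}{2\varphi^{-1}(1/2)}(h_K(x)+h_L(x))$ and using convexity bounds $\varphi(h_K/\lambda)+\varphi(h_L/\lambda)$ \emph{from above} by $1$. That would give $h_{K+_\varphi L}\le\lambda$, the wrong inequality. In fact convexity gives the bound \emph{from below}: since $\tfrac12(h_K/\lambda+h_L/\lambda)=\varphi^{-1}(1/2)$, one has
\[
\varphi\Bigl(\tfrac{h_K}{\lambda}\Bigr)+\varphi\Bigl(\tfrac{h_L}{\lambda}\Bigr)\ \ge\ 2\,\varphi\Bigl(\tfrac{h_K+h_L}{2\lambda}\Bigr)=2\,\varphi\bigl(\varphi^{-1}(1/2)\bigr)=1,
\]
whence $h_{K+_\varphi L}(x)\ge\lambda$, as required. (The paper runs the same convexity step starting from $\lambda=h_{K+_\varphi L}(x)$ instead.) Also, your worry about keeping the arguments in $[0,1]$ for (i) is resolved more directly than via (iii): from $\varphi_1(h_K/\lambda_1)+\varphi_1(h_L/\lambda_1)=1$ with $\varphi_1\ge0$ and $\varphi_1(1)=1$ it is immediate that each argument is $\le1$.
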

\begin{proof}
  In order to prove (\romannumeral 1), we only need to show
  $$h_{K+_{\varphi_1}L}(u)\leq h_{K+_{\varphi_2}L}(u), \,\,\,\,\,\,\,\,\,\,\,\,\, for\,\,u\in S^{n-1}.$$
By formula (\ref{equ-orl-con-add}), we have
\begin{align*}
  \varphi_1\left(\frac{h_K(u)}{h_{K+_{\varphi_1}L}(u)}\right)+\varphi_1\left(\frac{h_L(u)}{h_{K+_{\varphi_1}L}(u)}\right)=1, \\  \varphi_2\left(\frac{h_K(u)}{h_{K+_{\varphi_2}L}(u)}\right)+\varphi_2\left(\frac{h_L(u)}{h_{K+_{\varphi_2}L}(u)}\right)=1.
\end{align*}
Since $\varphi_1\leq\varphi_2$, so
\begin{align*}
  \varphi_2\left(\frac{h_K(u)}{h_{K+_{\varphi_1}L}(u)}\right)+\varphi_2\left(\frac{h_L(u)}{h_{K+_{\varphi_1}L}(u)}\right)\geq1,
\end{align*}
which means $$h_{K+_{\varphi_1}L}(u)\leq h_{K+_{\varphi_2}L}(u).$$
So $K+_{\varphi_1}L\subseteq K+_{\varphi_2}L$ .

(\romannumeral 2). Let $Id$ denote the identity function on $[0,1]$, by the convexity of  $\varphi$  on $[0,1]$, for $x\in [0,1]$, we have
\begin{align*}
  \varphi(x)=\varphi(x\cdot 1+(1-x)0)\leq x\varphi(1)+(1-x)\varphi(0),
\end{align*}
so we have $\varphi(x)\leq x$, which means that $\varphi\leq Id$. On the other hand, when $\varphi=Id$, the Orlicz Minkowski sum $K+_\varphi L$ is precisely the Minkowski sum $K+L$.  Now by (\romannumeral 1),  we have
$ K+_{\varphi}L \subseteq K+L $.

For the left hand inclusion, let $u\in S^{n-1}$, by the definition of Orlicz Minkowski sum we have
  \begin{align*}
    \varphi\left(\frac{h_K(u)}{h_{K+_{\varphi}L}(u)}\right)+\varphi\left(\frac{h_L(u)}{h_{K+_{\varphi}L}(u)}\right)=1.
  \end{align*}
  The convexity of $\varphi$ implies
  \begin{align*}
 2\varphi\left(\frac{h_K(u)+h_L(u)}{2h_{K+_{\varphi}L}(u)}\right)\leq 1.
  \end{align*}
Then, we have
\begin{align*}
  \frac{1}{2\varphi^{-1}(1/2)}(h_K(u)+h_L(u))\leq h_{K+_\varphi L}(u).
\end{align*}
By the definition of Minkowski sum we have $h_K(u)+h_L(u)=h_{K+L}(u)$, so we obtain
$$\frac{1}{2\varphi^{-1}(1/2)}h_{K+L}(u)\leq h_{K+_\varphi L}(u).$$
Which means $\frac{1}{2\varphi^{-1}(1/2)}K+L\subseteq K+_\varphi L$.

(\romannumeral 3). Note that $h_{conv(K\cup L)}(u)=\max\{h_K(u), h_L(u) \}$.  Then

a), If  $h_{conv(K\cup L)}(u)=h_K(u)$, for some $u\in\Omega$, then
\begin{align*}
\varphi\left(\frac{h_K(u)}{h_{conv(K\cup L)}(u)}\right)+\varphi\left(\frac{h_L(u)}{h_{conv(K\cup L)}(u)}\right)=\varphi(1)+\varphi\left(\frac{h_L(u)}{h_K(u)}\right)\geq 1.
\end{align*}

b), If  $h_{conv(K\cup L)}(u)=h_L(u)$,  for some $u\in S^{n-1}\setminus\Omega$, then
\begin{align*}
\varphi\left(\frac{h_K(u)}{h_{conv(K\cup L)}(u)}\right)+\varphi\left(\frac{h_L(u)}{h_{conv(K\cup L)}(u)}\right)=\varphi(1)+\varphi\left(\frac{h_K(u)}{h_L(u)}\right)\geq 1.
\end{align*}
So we obtain $$\varphi\left(\frac{h_K(u)}{h_{conv(K\cup L)}(u)}\right)+\varphi\left(\frac{h_L(u)}{h_{conv(K\cup L)}(u)}\right)\geq 1,$$
 for $u\in S^{n-1}$. By the definition of Orlicz Minkowski sum we obtain
\begin{align*}
  h_{conv(K\cup L)}(u)\leq h_{K+_\varphi L}(u).
\end{align*}
So we have $conv(K\cup L)\subseteq K+_\varphi L$.

(\romannumeral 4). Since
\begin{align*}
  \varphi\left(\frac{h_K(u)}{h_{K+_\varphi L}(u)}\right)+ \varphi\left(\frac{h_L(u)}{h_{K+_\varphi L}(u)}\right)=1.
\end{align*}
The increasing of $\varphi$ implies
\begin{align*}
1\leq  \varphi\left(\frac{\max\{h_K(u),h_L(u)\}}{h_{K+_\varphi L}(u)}\right)+ \varphi\left(\frac{\max\{h_K(u),h_L(u)\}}{h_{K+_\varphi L}(u)}\right).
\end{align*}
Then, $$h_{K+_\varphi L}(u)\leq\frac{1}{\varphi^{-1}(1/2)}\max\{h_K(u),h_L(u)\}.$$
So we have $K+_\varphi L\subseteq \frac{1}{\varphi^{-1}(1/2)} conv(K\cup L).$
We complete the proof.
\end{proof}
Let $L\in\mathcal L^n_i$, and $K\in\mathcal K^n$, the orthogonal projection of  $K$ onto $L$ is denoted by $K|L$.

For the Orlicz Minkowski sum we have the following lemma.
\begin{lem}\label{orl-pro-rel-lem}
  Let $\varphi\in \mathcal C$, $K,K'\in \mathcal K^n_o$, and $L\in\mathcal L^n_i$, then
  \begin{align*}
    (K+_\varphi K')|L=K|L+_\varphi K'|L.
  \end{align*}
\end{lem}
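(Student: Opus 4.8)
The plan is to reduce everything to a comparison of support functions, using the elementary identity relating the support function of a projection to that of the original body. First I would record that for any $M\in\mathcal K^n$ and $L\in\mathcal L^n_i$ one has $h_{M|L}(u)=h_M(u)$ for all $u\in L$: writing $P_L$ for orthogonal projection onto $L$, $h_{M|L}(u)=\max_{x\in M}\langle P_Lx,u\rangle=\max_{x\in M}\langle x,P_Lu\rangle=\max_{x\in M}\langle x,u\rangle=h_M(u)$, since $P_Lu=u$ when $u\in L$. Next I would observe that both sides of the asserted equality are convex bodies lying in $L$ — the left-hand side as a projection onto $L$, the right-hand side as an Orlicz Minkowski sum formed inside $L\cong\mathbb R^i$ — and that both contain the origin, because $K,K'\in\mathcal K^n_o$ gives $h_K,h_{K'}\ge 0$, whence the solutions of the defining equations for $h_{K+_\varphi K'}$ and $h_{K|L+_\varphi K'|L}$ are nonnegative. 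Since a convex body in $L$ is determined by the restriction of its support function to $L\cap S^{n-1}$, it then suffices to check that the two support functions agree there.

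For the main computation, fix $u\in L\cap S^{n-1}$. By the characterization (\ref{equ-orl-con-add}) of the Orlicz sum, together with strict monotonicity of $\lambda\mapsto\varphi(h_{K|L}(u)/\lambda)+\varphi(h_{K'|L}(u)/\lambda)$, the number $h_{K|L+_\varphi K'|L}(u)$ is the unique $\lambda>0$ with
\begin{align*}
\varphi\!\left(\frac{h_{K|L}(u)}{\lambda}\right)+\varphi\!\left(\frac{h_{K'|L}(u)}{\lambda}\right)=1.
\end{align*}
Substituting $h_{K|L}(u)=h_K(u)$ and $h_{K'|L}(u)=h_{K'}(u)$, this is exactly the equation characterizing $h_{K+_\varphi K'}(u)$, so $h_{K|L+_\varphi K'|L}(u)=h_{K+_\varphi K'}(u)$; applying the projection identity once more to $M=K+_\varphi K'$ yields $h_{K+_\varphi K'}(u)=h_{(K+_\varphi K')|L}(u)$, and the lemma follows.

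The only step calling for a little care — and the closest thing to an obstacle — is the dimensional bookkeeping: one has to be explicit that $K|L+_\varphi K'|L$ is formed within the $i$-dimensional space $L$, where $K|L$ and $K'|L$ are genuine convex bodies with interior, while verifying that for directions $u\in L$ the intrinsic and ambient support functions coincide (which is precisely the projection identity above). Once this identification is made, the argument is purely formal.
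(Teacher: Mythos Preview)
Your proof is correct and follows essentially the same route as the paper: both arguments rest on the projection identity for support functions together with the uniqueness of the solution $\lambda$ in the defining equation $\varphi(h_K/\lambda)+\varphi(h_{K'}/\lambda)=1$. The only cosmetic difference is that the paper uses the form $h_{M|L}(x)=h_M(x|L)$ for all $x\in\mathbb R^n$, whereas you restrict to directions $u\in L$ and invoke $h_{M|L}(u)=h_M(u)$; your additional remarks on dimensional bookkeeping make explicit a point the paper passes over silently, but the underlying argument is the same.
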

\begin{proof}
  Let $x\in \mathbb R^n$, the following fact is obvious,
  \begin{align}\label{pro-equ}
   h_{K+_\varphi K'}(x|L)=h_{(K+_\varphi K')|L}(x).
  \end{align}
  By the definition of Orlicz Minkowski sum we have
  \begin{align*}
    \varphi\left(\frac{h_K(x|L)}{h_{K+_\varphi K'}(x|L)}\right)+ \varphi\left(\frac{h_{K'}(x|L)}{h_{K+_\varphi K'}(x|L)}\right)=1.
  \end{align*}
  By (\ref{pro-equ}) we have,
   \begin{align*}
    \varphi\left(\frac{h_{K|L}(x)}{h_{K+_\varphi K'}(x|L)}\right)+ \varphi\left(\frac{h_{K'|L}(x)}{h_{K+_\varphi K'}(x|L)}\right)=1.
  \end{align*}
 On the other hand,
   \begin{align*}
    \varphi\left(\frac{h_{K|L}(x)}{h_{(K|L+_\varphi K'|L)}(x)}\right)+ \varphi\left(\frac{h_{K'|L}(x)}{h_{(K|L+_\varphi K'|L)}(x)}\right)=1.
  \end{align*}
 Comparing with the above two formulas shows $$h_{K|L+_\varphi K'|L}(x)=h_{(K+_\varphi K')}(x|L)=h_{(K+_\varphi K')|L}(x).$$
  Which means $$(K+_\varphi K')|L=K|L+_\varphi K'|L.$$
  So we complete the proof.
  \end{proof}
  If $\varphi(t)=t^p$ $(p\geq 1)$, these results is obtained by Firey \cite{fir-p1962}.

 Let $L\in\mathcal L^n_i$,  write $B_{i,L}=B_n\cap L$ the unit ball contained in $L$.
  The following Lemma will be useful in the proof of the main results.
\begin{lem}\label{orl-add-lem}
  Let $\varphi\in \mathcal C$, $e_i,e_j$ be the orthogonal unit vectors in $\mathbb R^n$, $\tilde L=span\{e_i,e_j\}$,  then
  \begin{align}\label{orl-add-con-lem}
  [-e_i,e_i]+_\varphi[-e_j,e_j]\subseteq \frac{\sqrt 2}{2\varphi^{-1}(1/2)}B_{2,\tilde L}.\end{align}
\end{lem}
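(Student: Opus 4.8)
The plan is to reduce the claim to a two‑dimensional computation with support functions. Write $M:=[-e_i,e_i]+_\varphi[-e_j,e_j]$. First I would observe that $M\subseteq\tilde L$: by Theorem~\ref{lin-rel-thm}(\romannumeral 2) we have $M\subseteq[-e_i,e_i]+[-e_j,e_j]$, and this ordinary Minkowski sum already lies in $\tilde L$. Hence it suffices to show $h_M(u)\le\frac{\sqrt 2}{2\varphi^{-1}(1/2)}$ for every unit vector $u=u_ie_i+u_je_j$ with $u_i^2+u_j^2=1$. Since $h_{[-e_i,e_i]}(u)=|u_i|$ and $h_{[-e_j,e_j]}(u)=|u_j|$, the characterization~(\ref{equ-orl-con-add}) of the Orlicz sum identifies $h_M(u)$ as the unique $\lambda>0$ with
\begin{align*}
 \varphi\left(\frac{|u_i|}{\lambda}\right)+\varphi\left(\frac{|u_j|}{\lambda}\right)=1 .
\end{align*}

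It is useful to first record the value along the diagonal direction $v=\tfrac1{\sqrt2}(e_i+e_j)$: there $2\varphi\!\left(\frac{1}{\sqrt2\,h_M(v)}\right)=1$, hence $h_M(v)=\frac{1}{\sqrt2\,\varphi^{-1}(1/2)}=\frac{\sqrt 2}{2\varphi^{-1}(1/2)}$, so the asserted radius is exactly $h_M(v)$ and the lemma is the statement that $h_M$ is maximal along the diagonal. To prove $h_M(u)\le\rho:=\frac{1}{\sqrt2\,c}$ with $c:=\varphi^{-1}(1/2)$ (so that $1=2\varphi(c)$), I would use that $\lambda\mapsto\varphi(|u_i|/\lambda)+\varphi(|u_j|/\lambda)$ is strictly decreasing; thus $h_M(u)\le\rho$ is equivalent to
\begin{align*}
 \varphi\!\left(\sqrt2\,c\,|u_i|\right)+\varphi\!\left(\sqrt2\,c\,|u_j|\right)\le 1=2\varphi(c).
\end{align*}
The barycentre of the two arguments is $\tfrac{\sqrt2\,c}{2}\big(|u_i|+|u_j|\big)$, which is $\le c$ since $|u_i|+|u_j|\le\sqrt2$ on the unit circle; the aim is to pass from this bound on the barycentre to the displayed bound on the average of the two values of $\varphi$, using the normalization $\varphi(1)=1$ and the constraint $u_i^2+u_j^2=1$.

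The step I expect to be the main obstacle is precisely this last passage. Convexity of $\varphi$ only gives $\tfrac12\big(\varphi(\sqrt2c|u_i|)+\varphi(\sqrt2c|u_j|)\big)\ge\varphi\big(\tfrac{\sqrt2c}{2}(|u_i|+|u_j|)\big)$, which is the opposite of what is needed, so a Jensen‑type argument by itself will not suffice; one has to exploit the Euclidean constraint $u_i^2+u_j^2=1$ (rather than only the weaker $|u_i|+|u_j|\le\sqrt2$) in an essential way. As a cruder fallback, Theorem~\ref{lin-rel-thm}(\romannumeral 4) gives $M\subseteq\frac{1}{\varphi^{-1}(1/2)}conv([-e_i,e_i]\cup[-e_j,e_j])$, and since the cross‑polytope $conv\{\pm e_i,\pm e_j\}$ has circumradius $1$ this yields $M\subseteq\frac{1}{\varphi^{-1}(1/2)}B_{2,\tilde L}$; so a statement of the correct shape is immediate, and the real work is to improve the constant by the factor $\tfrac{\sqrt2}{2}$, i.e. to show that the extreme direction for $M$ is the diagonal and not a coordinate axis. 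That the constant $\frac{\sqrt 2}{2\varphi^{-1}(1/2)}$ is optimal is already visible from $\varphi(t)=t$, where $M$ is the square $[-1,1]^2$ in $\tilde L$ with circumradius $\sqrt2=\frac{\sqrt2}{2\varphi^{-1}(1/2)}$, and from $\varphi(t)=t^2$, where $M$ is the Euclidean disc of radius $1=\frac{\sqrt2}{2\varphi^{-1}(1/2)}$.
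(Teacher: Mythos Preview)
Your reduction to a two-dimensional support-function computation, the identification of $h_M(v)=\frac{\sqrt2}{2\varphi^{-1}(1/2)}$ along the diagonal, and the recasting of the target inequality as $\varphi(\sqrt2\,c\,|u_i|)+\varphi(\sqrt2\,c\,|u_j|)\le 2\varphi(c)$ match the paper's set-up exactly. You are also right that the ``last passage'' is the crux---and your instinct that convexity alone cannot close it is correct, for a decisive reason: the lemma is \emph{false} for general $\varphi\in\mathcal C$.

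Indeed, by Theorem~\ref{lin-rel-thm}(\romannumeral3) one has $e_i\in[-e_i,e_i]\subseteq M$, so the inclusion~(\ref{orl-add-con-lem}) would force $\dfrac{\sqrt2}{2\varphi^{-1}(1/2)}\ge1$, i.e.\ $\varphi^{-1}(1/2)\le 1/\sqrt2$. For $\varphi(t)=t^{p}$ with $p>2$ (which lies in $\mathcal C$) one has $\varphi^{-1}(1/2)=2^{-1/p}>2^{-1/2}$, and the asserted radius $2^{1/p-1/2}$ is strictly less than $1=\lVert e_i\rVert_2$; concretely for $p=4$ the body $M$ is the $\ell_4$ unit disc in $\tilde L$, which is not contained in $2^{-1/4}B_{2,\tilde L}$. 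The paper's own argument breaks at the same place: from $1\le 2\varphi\big(\cos\theta/\lambda_{\omega(\theta)}\big)$ it obtains $\lambda_{\omega(\theta)}\le\cos\theta/\varphi^{-1}(1/2)$ on $(0,\pi/4]$ and then writes $\cos\theta/\varphi^{-1}(1/2)\le\frac{\sqrt2}{2\varphi^{-1}(1/2)}$, which would require $\cos\theta\le\sqrt2/2$ on that interval---whereas in fact $\cos\theta\ge\sqrt2/2$ there. What that chain legitimately yields is only your ``cruder fallback'' $M\subseteq\frac{1}{\varphi^{-1}(1/2)}B_{2,\tilde L}$. In short, the obstacle you flagged is not a missing trick but a genuine obstruction; the constant $\frac{\sqrt2}{2\varphi^{-1}(1/2)}$ is valid only under an additional hypothesis on $\varphi$ (it does hold, with the diagonal as the extreme direction, for $\varphi(t)=t^{p}$ with $1\le p\le 2$, consistent with the $L_p$ result the paper cites).
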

\begin{proof}
To prove $(\ref{orl-add-con-lem})$, we only need to show
  \begin{align*}
    h_{[-e_i,e_i]+_\varphi[-e_j,e_j]}(u)\leq \frac{\sqrt 2}{2\varphi^{-1}(1/2)},  \,\,\,\,\,\,\,\,\,\,\, for \,\,\,\,u\in S^{n-1}.
  \end{align*}
  For write simply, let $h_{[-e_i,e_i]+_\varphi[-e_j,e_j]}(u)=\lambda_u$. By the Orlicz Minkowski sum we have
  \begin{align*}
    \varphi\left(\frac{h_{[-e_i,e_i]}(u)}{\lambda_u}\right)+\varphi\left(\frac{h_{[-e_j,e_j]}(u)}{\lambda_u}\right)=1.
  \end{align*}
The symmetry of $[-e_i,e_i]+_\varphi[-e_j,e_j]$ ensure us it is enough to discuss it's support function with the parameter $\theta$ on interval $(0, \frac{\pi}{4}]$. Let $\omega(\theta)=e^{i\theta}$, then
  \begin{align*}
    \varphi\left(\frac{\cos\theta}{\lambda_{\omega(\theta)}}\right)+\varphi\left(\frac{\sin\theta}{\lambda_{\omega(\theta)}}\right)=1,
  \end{align*}
where $0< \theta \leq \frac{\pi}{4}$. The increasing of $\varphi$ and $\sin\theta\leq \cos\theta$ on interval $(0, \frac{\pi}{4}]$ implies
\begin{align*}
1= \varphi\left(\frac{\cos\theta}{\lambda_{\omega(\theta)}}\right)+\varphi\left(\frac{\sin\theta}{\lambda_{\omega(\theta)}}\right)\leq 2\varphi\left(\frac{\cos\theta}{\lambda_{\omega(\theta)}}\right).
\end{align*}
Then we have
\begin{align*}
 {\lambda_{\omega(\theta)}} \leq\frac{\cos\theta}{\varphi^{-1}(1/2)}\leq \frac{\sqrt 2}{2\varphi^{-1}(1/2)}.
\end{align*}
For other intervals, by the symmetry of $[-e_i,e_i]+_\varphi[-e_j,e_j]$, we have

\romannumeral  1:  If $\frac{\pi}{4}<\theta\leq\frac{\pi}{2}$, then $\lambda_{\omega(\theta)}=\lambda_{\omega(\frac{\pi}{2}-\theta)}$.

\romannumeral 2: If $\frac{\pi}{2}<\theta\leq\pi$, then $\lambda_{\omega(\theta)}=\lambda_{\omega(\pi-\theta)}$.

\romannumeral 3: If ${\pi}<\theta\leq2\pi$, then $\lambda_{\omega(\theta)}=\lambda_{\omega(2\pi-\theta)}$.\\
So we have $\lambda_{\omega_\theta}\leq\frac{\sqrt 2}{2\varphi^{-1}(1/2)}$ for $0<\theta\leq 2\pi$. We complete the proof.
\end{proof}

  If $\varphi(t)=t^p$, this result is obtained by Gordon and Junge \cite{gor-jun-vol1997}.

\section{Proof of main results}
In this section, we give the proofs of the main results.

{\it  Proof of Theorem \ref{out-rad-ine-thm}}: By Theorem \ref{lin-rel-thm} we have $\frac{1}{2\varphi^{-1}(1/2)} K+K'\subseteq K+_\varphi K'$, then
\begin{align*}
K+K'\subseteq2\varphi^{-1}(1/2)(K+_\varphi K').
\end{align*}
So we have
\begin{align*}
2\varphi^{-1}(1/2)R_1(K+_\varphi K')\geq R_1(K+K')\geq R_1(K)+R_1(K'),\\
2\varphi^{-1}(1/2)R_i(K+_\varphi K')\geq R_i(K+K')\geq\frac{1}{\sqrt 2}(R_i(K)+R_i(K')),
\end{align*}
$i=2,\cdots, n$, which shows the inequalities (\ref{out-rad-ine-1}) and (\ref{out-rad-ine-2}). Where we use the fact $R_1(K+K')\geq R_1(K)+R_1(K')$ and $R_i(K+K')\geq\frac{1}{\sqrt 2}(R_i(K)+R_i(K'))$, $i=2,\cdots, n$ (see \cite{gon-her-suc2012}).

To show the inequalities (\ref{out-rad-ine-1}) and (\ref{out-rad-ine-2}) are best possible, we find convex bodies satisfy the equality conditions.

For the equality of (\ref{out-rad-ine-1}), let $K=K'$, we have
\begin{align}\label{orl-dif-add}
  K+_\varphi K=\frac{1}{\varphi^{-1}(1/2)}K.
\end{align}
In fact, since $$\varphi\left(\frac{h_K(u)}{h_{K+_\varphi K}(u)}\right)+\varphi\left(\frac{h_K(u)}{h_{K+_\varphi K}(u)}\right)=1,$$
then,
 $$h_K(u)=\varphi^{-1}(1/2){h_{K+_\varphi K}(u)},$$
so (\ref{orl-dif-add}) holds.
Then we obtain,
\begin{align*}
  2\varphi^{-1}(1/2)R_1(K+_\varphi K) =2\varphi^{-1}(1/2)R_1\left(\frac{1}{\varphi^{-1}(1/2)}K\right)=R_1(K)+R_1(K).
\end{align*}
Which mean the equality in (\ref{out-rad-ine-1}) holds.

Next, for $i=2,\cdots,n-1$, we consider the convex bodies,
\begin{align*}
K=[-e_1,e_1]+\sum^n_{k=i+1}[-e_k,e_k],\,\,\,\,\,K'=[-e_2,e_2]+\sum^n_{k=i+1}[-e_k,e_k],
\end{align*}
i.e. the 0-symmetric $(n-i+1)$-cubes with edges parallel to the coordinate axes and with length 2 in the subspaces $L_j\subseteq\mathcal L^n_{n-i+1}$ (j=1,\,\,2). For $i=n$, we take $K=[-e_1,e_1]$, $K'=[-e_2,e_2]$.

Clearly, for $L\in \mathcal L^n_i$, $R(K|{L})\geq 1$, $R(K'|{L})\geq 1$.  Specially, if $L_0\in \mathcal L^n_i$ is generated by $\{e_1,\cdots,e_i\}$, then
\begin{align*}
  K|{L_0}=[-e_1,e_1],\,\,\,\,\,\,K'|{L_0}=[-e_2,e_2].
\end{align*}
Then we have
\begin{align*}
R_i(K)=\min_{L\in \mathcal L^n_i} R(K|{L})=R([-e_1,e_1])=1,\\
R_i(K')=\min_{L\in \mathcal L^n_i} R(K'|{L})=R([-e_2,e_2])=1.
\end{align*}
Since $dim(L_j\cap L)\geq 1$, then there exist $x\in K\cap L$ and $x'\in K'\cap L$ with $\lVert x\rVert_2, \lVert x'\rVert_2\geq 1$. By the symmetry of $K$  and $K'$, we may assume that $\langle x\cdot x'\rangle> 0$, then
\begin{align*}
\left \lVert\frac{x+x'}{2\varphi^{-1}(1/2)}\right \rVert_2\geq\frac{(\lVert x\rVert_2^2+\lVert x'\rVert_2^2)^{1/2}}{2\varphi^{-1}(1/2)}\geq\frac{2^{1/2}}{2\varphi^{-1}(1/2)}=\frac{\sqrt 2}{2\varphi^{-1}(1/2)}.
\end{align*}
Since $\frac{x+x'}{2\varphi^{-1}(1/2)}\in (K+_\varphi K')\cap L$. Using the fact that for $K\in \mathcal K^n_o$ and $L\in \mathcal L^n_i$, $$K\cap L\subseteq K|L,$$
we obtain,
\begin{align}\label{in-rad-pro-ine-1}
  R((K+_\varphi K')|L)\geq R((K+_\varphi K')\cap L)\geq \frac{\sqrt 2}{2\varphi^{-1}(1/2)}.
\end{align}
On the other hand, by Lemma \ref{orl-pro-rel-lem} we have
\begin{align*}
  (K+_\varphi K')|L_0=K|L_0+_\varphi K'|L_0=[-e_1,e_1]+_\varphi[-e_2,e_2].
\end{align*}
Lemma \ref{orl-add-lem} shows that
\begin{align*}
  (K+_\varphi K')|L_0=[-e_1,e_1]+_\varphi[-e_2,e_2]\subseteq \frac{\sqrt 2}{2\varphi^{-1}(1/2)}B_{2,\tilde{L}}.
\end{align*}
Where $\tilde L=span\{e_1,e_2\}$. So we have
\begin{align}\label{in-rad-pro-ine-2}
  R((K+_\varphi K')|L)\leq\frac{\sqrt 2}{2\varphi^{-1}(1/2)}.
\end{align}
Together with (\ref{in-rad-pro-ine-1}) and (\ref{in-rad-pro-ine-2}) we have
$$ R((K+_\varphi K')|L)=\frac{\sqrt 2}{2\varphi^{-1}(1/2)}.$$ Moreover,
\begin{align*}
  R_i(K+_\varphi K')=\min_{L\in \mathcal L^n_i}R((K+_\varphi K')|L)=\frac{\sqrt 2}{4\varphi^{-1}(1/2)}(R_i(K)+R_i(K')).
\end{align*}
Which gives the equality of (\ref{out-rad-ine-2}). So we complete the proof.{\hfill $\Box$}

Notice that, when $i=n$, namely, the circumradius $R(K+_\varphi K')$. By (\romannumeral 2) of Theorem \ref{lin-rel-thm} and the fact $R(K+K')\leq R(K)+R(K')$, the reverse inequality for the circumradius  holds
\begin{align*}
  R(K+_\varphi K')\leq R(K)+R(K').
\end{align*}
However, for $R_i(K+_\varphi K')$ ($i=1,2,\cdots,n-1$) there is no chance to get reverse inequalities.
\begin{prop}
Let $\varphi\in\mathcal C$ and $K, K'\in \mathcal K^n_o$, for $i=1,\cdots n-1$, there exists no constant $c>0$ such that
\begin{align*}
  cR_i(K+_\varphi K')\leq R_i(K)+R_i(K').
\end{align*}
\end{prop}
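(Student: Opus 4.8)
The plan is to exhibit, for each fixed $i\in\{1,\dots,n-1\}$, a family of pairs of convex bodies $K,K'$ depending on a parameter, for which $R_i(K)$ and $R_i(K')$ stay bounded (indeed constant) while $R_i(K+_\varphi K')$ blows up; this forces any would-be constant $c$ in $cR_i(K+_\varphi K')\le R_i(K)+R_i(K')$ to be $0$, a contradiction. Since by (iii) of Theorem~\ref{lin-rel-thm} we have $\mathrm{conv}(K\cup K')\subseteq K+_\varphi K'$, and since $R_i$ is monotone under inclusion, it suffices to make $R_i\big(\mathrm{conv}(K\cup K')\big)$ large while keeping $R_i(K),R_i(K')$ small. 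In other words, the Orlicz constant $\varphi^{-1}(1/2)$ plays no role here at all: the obstruction is already present for ordinary Minkowski/convex-hull combinations, which is precisely the phenomenon isolated by Gonz\'alez and Hern\'andez Cifre.

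The concrete construction I would use: take two thin, long segments (or thin boxes) whose directions are ``spread out'' so that their convex hull, while thin in each body separately when projected to a good $i$-subspace, cannot be squeezed into a thin cylinder over any $i$-dimensional subspace. For $i=1$ the cleanest choice is $K=[-e_1,e_1]$ and $K'=[-te_2,te_2]$ is the wrong direction — instead scale so that $R_1(K)=R_1(K')$ stays fixed: e.g. let $K=\mathrm{conv}\{\pm e_1\}$ thickened to width $\varepsilon$ in all other directions and $K'$ the analogous body along $e_2$; then $R_1(K)=R_1(K')=\varepsilon/2\to 0$, whereas $\mathrm{conv}(K\cup K')\supseteq\mathrm{conv}\{\pm e_1,\pm e_2\}$ has minimal width bounded below by a positive absolute constant, so $R_1(K+_\varphi K')\ge R_1(\mathrm{conv}(K\cup K'))\ge c_0>0$. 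Then $cR_1(K+_\varphi K')\le R_1(K)+R_1(K')=\varepsilon\to0$ forces $c=0$. For general $i\le n-1$ one replaces the segments by $0$-symmetric boxes that are cubes of side $\varepsilon$ in an $(n-i)$-dimensional coordinate subspace and additionally extend by $1$ along $e_1$ (resp.\ $e_2$), mimicking the bodies already used in the proof of Theorem~\ref{out-rad-ine-thm}, and observes that $R_i$ of each is $\varepsilon/2$ (achieved by projecting onto a subspace avoiding $e_1$, resp.\ $e_2$) while $R_i(\mathrm{conv}(K\cup K'))$ stays $\ge c_0>0$ because no $i$-subspace can simultaneously kill the contributions of both $e_1$ and $e_2$ together with the fixed cube directions.

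The key steps, in order, are: (1) record that $R_i$ is monotone with respect to set inclusion and invariant under scaling in the obvious way; (2) invoke Theorem~\ref{lin-rel-thm}(iii) to get $R_i(K+_\varphi K')\ge R_i(\mathrm{conv}(K\cup K'))$, thereby eliminating $\varphi$ from the discussion; (3) define the parametrized family $K=K_\varepsilon$, $K'=K'_\varepsilon$ as above; (4) compute $R_i(K_\varepsilon)=R_i(K'_\varepsilon)=\varepsilon/2$ by choosing the minimizing $i$-subspace explicitly; (5) bound $R_i(\mathrm{conv}(K_\varepsilon\cup K'_\varepsilon))$ below by a positive constant independent of $\varepsilon$; (6) conclude that $cR_i(K+_\varphi K')\le R_i(K)+R_i(K')$ would give $c\,c_0\le\varepsilon$ for all $\varepsilon>0$, hence $c\le0$, contradicting $c>0$.

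The main obstacle is step (5): verifying that $R_i$ of the convex hull stays bounded below for \emph{all} $i$-dimensional subspaces $L$, i.e.\ that one cannot cheat by tilting $L$. The way I would handle it is to note that $\mathrm{conv}(K_\varepsilon\cup K'_\varepsilon)$ contains the fixed polytope $P=\mathrm{conv}\{\pm e_1,\pm e_2\}$ (together with the $\varepsilon$-cube directions, but $P$ alone suffices), so $R_i(\mathrm{conv}(K_\varepsilon\cup K'_\varepsilon))\ge R_i(P)\ge R_1(P)=\omega(P)/2>0$, using that outer radii increase in $i$ and the explicit value of $R_1$ from the introduction; this reduces the whole difficulty to the elementary fact $\omega\big(\mathrm{conv}\{\pm e_1,\pm e_2\}\big)=\sqrt2>0$, which is immediate. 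Everything else is bookkeeping of the kind already carried out in the proof of Theorem~\ref{out-rad-ine-thm}.
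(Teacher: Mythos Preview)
Your overall strategy---reduce to a $\varphi$-free lower bound via Theorem~\ref{lin-rel-thm} and then produce examples---is sound, but step~(5) has a real gap. You bound $R_i\big(\mathrm{conv}(K_\varepsilon\cup K'_\varepsilon)\big)$ from below by $R_i(P)$ with $P=\mathrm{conv}\{\pm e_1,\pm e_2\}$ and then assert $R_i(P)\ge R_1(P)=\omega(P)/2>0$. But $P$ is only $2$-dimensional, so in $\mathbb R^n$ with $n\ge 3$ its minimal width is $\omega(P)=0$ (project onto $e_3$), hence $R_1(P)=0$; more generally $R_i(P)=0$ whenever $i\le n-2$, since one may choose $L\in\mathcal L^n_i$ with $\mathrm{span}\{e_1,e_2\}\subseteq L^\perp$, giving $P|L=\{0\}$. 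Your argument therefore only survives for $i=n-1$ (and trivially for $n=2$), not for all $i\le n-1$. The same defect affects your thickened boxes for general $i$: two bodies each extending in a single ``long'' direction together control only two coordinate directions, which is not enough to force $R_i>0$ once $i\le n-2$.

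The paper avoids this by taking bodies that together span $n-i+1$ coordinate directions rather than two: $K=[-e_{n-i+1},e_{n-i+1}]$ and $K'=\sum_{k=1}^{n-i}[-e_k,e_k]$. Each is low-dimensional enough that some $i$-plane kills its projection, so $R_i(K)=R_i(K')=0$ exactly (no $\varepsilon$-limit needed). But $K+K'$ (and hence $K+_\varphi K'$, via Theorem~\ref{lin-rel-thm}(ii)) contains an $(n-i+1)$-dimensional cube $Q$; for any $L\in\mathcal L^n_i$ one has $\dim(L\cap\mathrm{span}\,Q)\ge i+(n-i+1)-n=1$, so $Q|L\neq\{0\}$ and $R_i(Q)>0$. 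Your construction is easily repaired along the same lines: let $K'_\varepsilon$ extend along $e_2,\dots,e_{n-i+1}$ instead of just $e_2$, so that $\mathrm{conv}(K_\varepsilon\cup K'_\varepsilon)$ contains a fixed $(n-i+1)$-dimensional cross-polytope; then the dimension-count above gives the uniform positive lower bound you need.
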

\begin{proof}
To show the non-existence of a reverse inequality, for $i=1,\cdots,n-1$. Take the convex bodies
  \begin{align*}
    K=[-e_{n-i+1},e_{n-i+1}] \,\,\,\,and \,\,\,\,K'=\sum^{n-i}_{k=1}[-e_k,e_k].
  \end{align*}
Let $\bar L_0$ and $\bar L'_0$ in $\mathcal L^n_i$ spanned by $\{e_{n-i},e_{n-i+2},\cdots, e_n\}$ and $\{e_{n-i+1},e_{n-i+2},\cdots, e_n\}$, respectively. Then
\begin{align*}
  K|{\bar L_{0}}=K'|{\bar L'_{0}}=\{0\}.
\end{align*}
So, $R_{i}(K)=R_{i}(K')=0$, i.e., $R_{i}(K)+R_{i}(K')=0$. On the other hand,
\begin{align*}
K+_\varphi K'\supseteq\frac{1}{2\varphi^{-1}(1/2)}(K+K')=\frac{1}{2\varphi^{-1}(1/2)}\sum^{n-i+1}_{k=1}[-e_k,e_k],
\end{align*}
that is $K+_\varphi K'$ contains an $(n-i+1)-$dimensional cube, which implies that $dim((K+_\varphi K')|L)\geq 1$ for $L\in\mathcal L^n_i$. Then, $R_i(K+_\varphi K')>0$, so there exists no constant $c>0$ such that
$$ cR_i(K+_\varphi K')\leq R_i(K)+R_i(K').$$
 We complete the proof.
\end{proof}
If take $\varphi (t)=t^p$, we obtain the following corollaries, which is obtained by Gonz\'alez and Hern\'andez Cifre \cite{gon-her-on2014}.
\begin{cor}
  Let  $K, K'\in \mathcal K^n_o$ and $p\geq 1$. Then
  \begin{align*}
&2^{\frac{p-1}{p}}R_1(K+_p K')\geq R_1(K)+R_1(K'),   \,\,\,\,\,\,\,\, for \,\,p\geq 1,\\
 &2^{\frac{3p-2}{2p}}R_i(K+_p K')\geq R_i(K)+R_i(K'), \,\,\,for \,\,1\leq p\leq 2,\,\,\, i=2,\cdots,n,\\
  &R_i(K+_p K')\geq \max\{R_i(K), R_i(K')\}, \,\,\,for \,\, p\geq 2,\,\,\, i=2,\cdots,n.
  \end{align*}
  All inequalities are best possible.
    \end{cor}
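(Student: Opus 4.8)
The plan is to obtain this corollary by specializing the general results already proved — Theorem \ref{lin-rel-thm} and Theorem \ref{out-rad-ine-thm} — to the generating function $\varphi(t)=t^p$, $p\geq 1$. First I would check that $\varphi(t)=t^p$ indeed lies in $\mathcal C$: it is strictly increasing on $[0,\infty)$, convex for $p\geq 1$, and satisfies $\varphi(0)=0$, $\varphi(1)=1$; moreover $\varphi^{-1}(s)=s^{1/p}$, so $\varphi^{-1}(1/2)=2^{-1/p}$. For this choice of $\varphi$ the Orlicz sum $K+_\varphi K'$ is precisely the $L_p$-sum $K+_p K'$, so everything reduces to substituting $2^{-1/p}$ for $\varphi^{-1}(1/2)$.

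For the first two estimates I would plug $\varphi^{-1}(1/2)=2^{-1/p}$ directly into \eqref{out-rad-ine-1} and \eqref{out-rad-ine-2}. The constant in \eqref{out-rad-ine-1} becomes $2\cdot 2^{-1/p}=2^{(p-1)/p}$, yielding $2^{(p-1)/p}R_1(K+_pK')\geq R_1(K)+R_1(K')$ for every $p\geq 1$; the constant in \eqref{out-rad-ine-2} becomes $2\sqrt2\cdot 2^{-1/p}=2^{3/2-1/p}=2^{(3p-2)/(2p)}$, giving $2^{(3p-2)/(2p)}R_i(K+_pK')\geq R_i(K)+R_i(K')$ for $i=2,\dots,n$ and every $p\geq 1$ — in particular on the range $1\leq p\leq 2$ quoted in the statement, which is the range on which this bound is not superseded by the third one.

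For the third estimate (stated for $p\geq 2$) I would not use Theorem \ref{out-rad-ine-thm} but rather part (\romannumeral 3) of Theorem \ref{lin-rel-thm}, namely $conv(K\cup K')\subseteq K+_\varphi K'$, combined with the monotonicity of $R_i$ under inclusion: if $M\subseteq M'$ then $M|L\subseteq M'|L$ for every $L$, hence $R(M|L)\leq R(M'|L)$ and thus $R_i(M)\leq R_i(M')$. Since $K,K'\subseteq conv(K\cup K')\subseteq K+_p K'$, this gives $R_i(K+_p K')\geq R_i(conv(K\cup K'))\geq\max\{R_i(K),R_i(K')\}$. This argument actually works for all $p\geq 1$; it is recorded only for $p\geq 2$ because there $2^{(3p-2)/(2p)}\geq 2$, so $\max\{R_i(K),R_i(K')\}\geq\tfrac12(R_i(K)+R_i(K'))\geq 2^{-(3p-2)/(2p)}(R_i(K)+R_i(K'))$ and the max-bound dominates the second estimate.

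Finally, for the "best possible" assertions I would recycle the extremal configurations from the proof of Theorem \ref{out-rad-ine-thm}. Taking $K=K'$, formula \eqref{orl-dif-add} with $\varphi^{-1}(1/2)=2^{-1/p}$ gives $K+_pK=2^{1/p}K$, whence $2^{(p-1)/p}R_1(2^{1/p}K)=2R_1(K)=R_1(K)+R_1(K)$, so the first inequality is attained. The cubes $K=[-e_1,e_1]+\sum_{k=i+1}^n[-e_k,e_k]$ and $K'=[-e_2,e_2]+\sum_{k=i+1}^n[-e_k,e_k]$ (and $K=[-e_1,e_1]$, $K'=[-e_2,e_2]$ for $i=n$) make the second inequality an equality by exactly the computation given there with $\varphi^{-1}(1/2)=2^{-1/p}$. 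For the third inequality the constant $1$ cannot be improved: with $K=B_n$ and $K'=\lambda B_n$ one has $K+_pK'=(1+\lambda^p)^{1/p}B_n$, so $R_i(K+_pK')/\max\{R_i(K),R_i(K')\}=(1+\lambda^p)^{1/p}/\lambda\to 1$ as $\lambda\to\infty$. I expect no genuine obstacle here; the only care needed is the exponent bookkeeping and correctly assigning the second and third estimates to their respective ranges of $p$, since no geometric input beyond the two theorems already established enters the proof.
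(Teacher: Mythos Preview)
Your proposal is correct and essentially coincides with what the paper does: it introduces the corollary with the single sentence ``If take $\varphi(t)=t^p$, we obtain the following corollaries, which is obtained by Gonz\'alez and Hern\'andez Cifre \cite{gon-her-on2014}'' and gives no further argument. Your exponent bookkeeping from \eqref{out-rad-ine-1}--\eqref{out-rad-ine-2} with $\varphi^{-1}(1/2)=2^{-1/p}$ is exactly that specialization.

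One point worth highlighting is that you do more than the paper here: the third line of the corollary, $R_i(K+_pK')\geq\max\{R_i(K),R_i(K')\}$, does \emph{not} drop out of Theorem~\ref{out-rad-ine-thm} by substitution, and the paper does not indicate how to get it from the Orlicz framework (it simply cites \cite{gon-her-on2014}). Your route via part~(\romannumeral 3) of Theorem~\ref{lin-rel-thm}, $\mathrm{conv}(K\cup K')\subseteq K+_\varphi K'$, together with the monotonicity of $R_i$ under inclusion, is the clean way to extract it from the tools already in the paper, and it indeed holds for all $p\geq 1$. Your sharpness arguments are also fine; the paper relies on the cited reference for them.
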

\begin{cor}
    Let  $K, K'\in \mathcal K^n_o$ and $p\geq 1$. Then
    \begin{align*}
      R_n(K+_pK')\leq R_n(K)+R_n(K')
    \end{align*}
    which is tight, and for any $i=1,\cdots,n-1$, there exists no constant $c>0$ such that $cR_i(K+_pK')\leq R_i(K)+R_i(K')$.
\end{cor}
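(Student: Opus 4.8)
The plan is to obtain both halves of the Corollary as the special case $\varphi(t)=t^p$ of results already proved in the excerpt, so the first step is the routine verification that $\varphi(t)=t^p\in\mathcal C$ for every $p\ge 1$: it is convex, strictly increasing on $[0,\infty)$, and satisfies $\varphi(0)=0$, $\varphi(1)=1$. Consequently $K+_\varphi K'=K+_pK'$ and $\varphi^{-1}(1/2)=2^{-1/p}$, so that $\frac{1}{2\varphi^{-1}(1/2)}=2^{(1-p)/p}$.

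First I would prove the inequality for $R_n$. Applying part (\romannumeral 2) of Theorem \ref{lin-rel-thm} to $\varphi(t)=t^p$ gives $K+_pK'\subseteq K+K'$; since $R_n$ is the circumradius and the circumradius is monotone under inclusion, $R_n(K+_pK')\le R(K+K')$, and combining with the classical estimate $R(K+K')\le R(K)+R(K')$ yields $R_n(K+_pK')\le R_n(K)+R_n(K')$. For the tightness I would exhibit bodies making the ratio $R_n(K+_pK')/(R_n(K)+R_n(K'))$ arbitrarily close to $1$: taking $K=B_n$ and $K'=\rho B_n$ one computes from the definition of the support function that $h_{K+_pK'}(u)=(1+\rho^p)^{1/p}$ on $S^{n-1}$, hence $K+_pK'=(1+\rho^p)^{1/p}B_n$ and the ratio equals $(1+\rho^p)^{1/p}/(1+\rho)$, which tends to $1$ as $\rho\to 0^+$ (and also as $\rho\to\infty$); in particular when $p=1$ one has $K+_1K'=K+K'$ and the ratio equals $1$ for any pair with $R(K+K')=R(K)+R(K')$, e.g.\ $K=K'$.

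Next I would dispose of the non-existence of a reverse inequality for $i=1,\dots,n-1$. This is precisely Proposition \ref{rev-out-rad-pro} read with $\varphi(t)=t^p$: take $K=[-e_{n-i+1},e_{n-i+1}]$ and $K'=\sum_{k=1}^{n-i}[-e_k,e_k]$. Projecting $K$ onto the $i$-subspace spanned by $\{e_{n-i},e_{n-i+2},\dots,e_n\}$ and $K'$ onto the $i$-subspace spanned by $\{e_{n-i+1},e_{n-i+2},\dots,e_n\}$ annihilates them, so $R_i(K)=R_i(K')=0$; on the other hand $K+_pK'\supseteq 2^{(1-p)/p}(K+K')=2^{(1-p)/p}\sum_{k=1}^{n-i+1}[-e_k,e_k]$ contains a nondegenerate $(n-i+1)$-cube, whence $\dim\big((K+_pK')|L\big)\ge 1$ for every $L\in\mathcal L^n_i$ and $R_i(K+_pK')>0$. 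Therefore no constant $c>0$ can satisfy $cR_i(K+_pK')\le R_i(K)+R_i(K')=0$.

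Since both statements are direct specializations of theorems established earlier, there is no genuine obstacle; the only points needing a little care are the optimality of the constant $1$ in the $R_n$ bound — settled by the scaled-ball family $K=B_n$, $K'=\rho B_n$ above — and the fact that the bodies $K,K'$ used in the reverse-inequality part are lower-dimensional, which is admissible here in the same (slightly informal) sense already used in the proof of Proposition \ref{rev-out-rad-pro}.
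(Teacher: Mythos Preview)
Your proposal is correct and follows essentially the same route as the paper: the inequality $R_n(K+_pK')\le R_n(K)+R_n(K')$ is obtained exactly as in the remark preceding Proposition~\ref{rev-out-rad-pro} (via $K+_\varphi K'\subseteq K+K'$ and $R(K+K')\le R(K)+R(K')$), and the non-existence part is just Proposition~\ref{rev-out-rad-pro} read with $\varphi(t)=t^p$. Your scaled-ball family for tightness is a useful addition, since the paper states tightness without supplying an explicit example for $p>1$.
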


 More specially, if $p=1$, it was shown in \cite{gon-her-suc2012}.

Now we deal with the inner radii $r_i$. The proof of Theorem \ref{in-rad-ine-thm} is similar with Theorem \ref{out-rad-ine-thm}.

{\it Proof of Theorem \ref{in-rad-ine-thm}:} By Theorem \ref{lin-rel-thm}, we have $\frac{1}{2\varphi^{-1}(1/2)} K+K'\subseteq K+_\varphi K'$, we obtain
\begin{align*}
2\varphi^{-1}(1/2)r_n(K+_\varphi K')\geq r_n(K+K')\geq r_n (K)+r_n(K'),\\
2\varphi^{-1}(1/2)r_i(K+_\varphi K')\geq r_i(K+K')\geq\frac{1}{\sqrt 2}(r_i(K)+r_i(K')),
\end{align*}
$i=1,2,\cdots, n-1$, which shows the inequalities (\ref{inn-rad-ine-1}) and (\ref{inn-rad-ine-2}).

Similarly, we will show  inequalities (\ref{inn-rad-ine-1}) and (\ref{inn-rad-ine-2}) are the best possible.
 Let $K=K'$, by (\ref{orl-dif-add}), we have
\begin{align*}
  2\varphi^{-1}(1/2)r_n(K+_\varphi K)=r_n(K)+r_n(K).
\end{align*}

For $i=1,2,\cdots,n-1$, let $j=2i-n$, if $2i\geq n$, and $j=0$ otherwise. We consider the $i-$dimensional linear subspaces
$\tilde L_0=span\{e_1,\cdots, e_{j},e_{j+1},\cdots, e_i\}$
 and $\tilde  L'_0=span\{e_1,\cdots, e_{j},e_{i+1},\cdots, e_{2i-j}\}$
 in $\mathcal L^n_i$. Let $B_{i,\tilde L_0}$ and  $B_{i,\tilde L'_0}$ be $i$-dimensional unit balls. It is clear that, $r_i(B_{i,\tilde L_0})=r_i(B_{i,\tilde L'_0})=1$.

 In the following, if we can show
 \begin{align}\label{orl-bal-add-ine}
   r((B_{i,\tilde L_0}+_\varphi B_{i,\tilde L'_0)})\cap L)\leq\frac{\sqrt 2}{2\varphi^{-1}(1/2)},
 \end{align}
 for $L\in \mathcal L^n_i$, then we obtain
 \begin{align*}
   r((B_{i,\tilde L_0}+_\varphi B_{i,\tilde L'_0})\cap  L)=\frac{\sqrt 2}{2\varphi^{-1}(1/2)}.
 \end{align*}
 In fact, since
 \begin{align*}
   \frac{\sqrt 2}{2\varphi^{-1}(1/2)}=\frac{\sqrt 2}{4\varphi^{-1}(1/2)}(r_i(B_{i,\tilde L_0})+r_i(B_{i,\tilde L'_0}))\leq r_i(B_{i,\tilde L_0}+_\varphi B_{i,\tilde L_0}).
 \end{align*}
 Note that $\tilde L_0+\tilde L'_0=\mathbb R^n$, then $dim(B_{i,\tilde L_0}+B_{i,\tilde L'_0})=n$, and $$dim((B_{i,\tilde L_0}+B_{i,\tilde L'_0})\cap  L)=i,$$
 for arbitrary $L\in\mathcal L^n_i$. Now if we can find $x\in bd((B_{i,\tilde L_0}+B_{i,\tilde L'_0})\cap \bar L)$ with $\lVert  x \rVert_2\leq\frac{\sqrt 2}{2\varphi^{-1}(1/2)}$ (where $bd(K)$ denotes the boundary of $K$), then we immediately get (\ref{orl-bal-add-ine}).

Now we find
such $x$, let $\tilde L''=span\{e_{j+1},\cdots, e_n\}$ if $j=2i-n\,\, (i.e. \,\,if\,\, 2i\geq n)$ then
\begin{align*}
  dim( L\cap\tilde  L'')&=dim L+dim\tilde L''-dim( L+\tilde L'')\\
  &\geq i-j=i-2i+n=n-i\geq 1.
\end{align*}
On the other hand, if $j=0$ then $\tilde L''=\mathbb R^n$, and so $ L\cap\tilde  L''=\ L$. Therefore, in both case $$dim((B_{i,\tilde L_0}+B_{i,\tilde L'_0})\cap L\cap \tilde L'')\geq 1,$$
which ensures the existence of a boundary point $z\in bd((B_{i,\tilde L_0}+_\varphi B_{i,\tilde L'_0})\cap L\cap \tilde L'')$. For any $z\in bd((B_{i,\tilde L_0}+_\varphi B_{i,\tilde L'_0})\cap \tilde L'')$ can  been expressed in the form $z=x+x'$, where $$x\in span\{e_{j+1},\cdots e_{i}\} \,\,and\,\, x'\in span\{e_{i+1},\cdots,e_{2i-j}\}.$$ Observe that $x,x'$ lie in orthogonal subspaces. Writing $u=\frac{z}{\lVert z\rVert_2}$, we have
\begin{align*}
\lVert z\rVert_2=\langle z\cdot u\rangle \leq h_{B_{i,\tilde L_0}+_\varphi B_{i,\tilde L'_0}}(u).
\end{align*}
Note that,
\begin{align*}
\nonumber h_{B_{i,\tilde L_0}}(u)&=\sup_{y\in B_{i,\tilde L_0}}\langle y\cdot u\rangle=\frac{1}{\lVert z\rVert_2}\sup_{y\in B_{i,\tilde L_0}}\langle y\cdot x\rangle=\frac{1}{\lVert z\rVert_2}\langle \frac{x}{\lVert x\rVert_2}\cdot x\rangle\\
&=h_{\left[-\frac{x}{\lVert x\rVert_2},\frac{x}{\lVert x\rVert_2}\right]}(u).
\end{align*}
Similarly, we have $h_{B_{i,\tilde L'_0}}(u)=h_{\left[-\frac{x'}{\lVert x'\rVert_2},\frac{x'}{\lVert x'\rVert_2}\right]}(u)$,
so we have
\begin{align*}
\lVert z\rVert_2\leq h_{\left[-\frac{x}{\lVert x\rVert_2},\frac{x}{\lVert x\rVert_2}\right]+_\varphi\left[-\frac{x'}{\lVert x'\rVert_2},\frac{x'}{|x'|_2}\right]}(u).
\end{align*}
By Lemma \ref{orl-add-lem}, we have
\begin{align*}
\left[-\frac{x}{\lVert x\rVert_2},\frac{x}{\lVert x\rVert_2}\right]+_\varphi\left[-\frac{x'}{\lVert x'\rVert_2},\frac{x'}{\lVert x'\rVert_2}\right]\subseteq \frac{\sqrt 2}{2\varphi^{-1}(1/2)}B_{2,\tilde L_2},
\end{align*}
where $\tilde  L_2=span\{\frac{x}{\lVert x\rVert},\frac{x'}{\lVert x'\rVert}\}$. So we have
\begin{align*}
 \lVert z\rVert_2\leq \frac{\sqrt 2}{2\varphi^{-1}(1/2)}.
\end{align*}
Then we have $$r_i(B_{i,\tilde L_0}+_\varphi B_{i,\tilde L'_0})=\frac{\sqrt 2}{2\varphi^{-1}(1/2)}.$$
So we complete the proof.

Similarly, by (\romannumeral 2) of Theorem \ref{lin-rel-thm} and the fact $r_1(K+K')\leq r_1(K)+r_1(K')$, the reverse inequality for the diameter  holds
\begin{align*}
  r_1(K+_\varphi K')\leq r_1(K)+r_1(K').
\end{align*}
For $r_i(K+_\varphi K')$ ($i=2,\cdots,n$) there is no chance to get reverse inequalities.
\begin{prop}
Let $\varphi\in\mathcal C$ and $K, K'\in \mathcal K^n_0$, for $i=2,\cdots,n$, there exists no constant $c>0$ such that
\begin{align*}
  cr_i(K+_\varphi K')\leq r_i(K)+r_i(K').
\end{align*}
\end{prop}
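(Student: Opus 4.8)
The plan is to reproduce, in the setting of inner radii, the argument used for the preceding Proposition on outer radii: exhibit a pair $K,K'$ of bodies for which $r_i(K)+r_i(K')=0$ while $r_i(K+_\varphi K')$ is bounded away from $0$ by an absolute constant, so that $c\,r_i(K+_\varphi K')\le r_i(K)+r_i(K')$ becomes impossible for every $c>0$. The mechanism is that $r_i$ annihilates any body of affine dimension less than $i$, whereas the Orlicz sum of two such bodies whose spans are placed in complementary coordinate directions can be genuinely $i$-dimensional, and in fact contain a full $i$-dimensional ball.

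Concretely, for $i\in\{2,\dots,n\}$ set $\tilde L=\mathrm{span}\{e_1,\dots,e_{i-1}\}$ and $L_i=\mathrm{span}\{e_1,\dots,e_i\}$, and take
\[
K=B_{i-1,\tilde L},\qquad K'=[-e_i,e_i].
\]
First I would check that $r_i(K)=r_i(K')=0$: since $\dim K=i-1<i$ and $\dim K'=1\le i-1<i$, each section $K\cap(x+L)$, resp.\ $K'\cap(x+L)$, with $L\in\mathcal L^n_i$ lies in an affine subspace of dimension at most $i-1$, hence has empty interior inside the $i$-dimensional flat $x+L$ and contains no $i$-dimensional ball of positive radius; so both inner radii vanish and $r_i(K)+r_i(K')=0$.

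Next I would bound $r_i(K+_\varphi K')$ from below. The Minkowski sum $K+K'=B_{i-1,\tilde L}+[-e_i,e_i]$ contains $B_{i,L_i}$, because any $z\in B_{i,L_i}$ decomposes as $z=y+z_ie_i$ with $y\in\tilde L$, $\lVert y\rVert_2^2\le 1-z_i^2\le 1$ and $|z_i|\le 1$, so that $y\in B_{i-1,\tilde L}$ and $z_ie_i\in[-e_i,e_i]$. Hence, by (\romannumeral 2) of Theorem~\ref{lin-rel-thm},
\[
K+_\varphi K'\ \supseteq\ \frac{1}{2\varphi^{-1}(1/2)}(K+K')\ \supseteq\ \frac{1}{2\varphi^{-1}(1/2)}\,B_{i,L_i},
\]
and therefore $r_i(K+_\varphi K')\ge \dfrac{1}{2\varphi^{-1}(1/2)}>0$. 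Since the right-hand side $r_i(K)+r_i(K')$ equals $0$, no constant $c>0$ can satisfy $c\,r_i(K+_\varphi K')\le r_i(K)+r_i(K')$, which is the assertion.

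I do not expect a genuine obstacle here; the only care needed is the dimension bookkeeping in the two steps above and, as in the preceding Proposition, the tacit use of lower-dimensional ``convex bodies''. If one insists on full-dimensional examples, I would instead fatten the two bodies by $\varepsilon\sum_k[-e_k,e_k]$ in the missing directions: the fattened body $K_\varepsilon$ is still only $2\varepsilon$-wide in $n-i+1$ coordinate directions, so it cannot contain an $i$-ball of radius exceeding $\varepsilon\sqrt{n-i+1}$, giving $r_i(K_\varepsilon),\,r_i(K'_\varepsilon)\le \varepsilon\sqrt{n-i+1}\to0$, while the inclusion $K_\varepsilon+_\varphi K'_\varepsilon\supseteq\frac{1}{2\varphi^{-1}(1/2)}(K_\varepsilon+K'_\varepsilon)\supseteq\frac{1}{2\varphi^{-1}(1/2)}B_{i,L_i}$ keeps $r_i(K_\varepsilon+_\varphi K'_\varepsilon)\ge\frac{1}{2\varphi^{-1}(1/2)}$ for all $\varepsilon>0$; letting $\varepsilon\to0^+$ again precludes any fixed $c>0$.
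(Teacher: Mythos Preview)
Your proposal is correct and follows essentially the same approach as the paper. The paper chooses $K=[-e_1,e_1]$ and $K'=\sum_{k=2}^{i}[-e_k,e_k]$ so that $K+K'$ is an $i$-dimensional cube, whereas you use $K=B_{i-1,\tilde L}$ and $K'=[-e_i,e_i]$ so that $K+K'$ contains an $i$-dimensional ball; in both cases the conclusion comes from $r_i(K)=r_i(K')=0$ together with the inclusion $K+_\varphi K'\supseteq\frac{1}{2\varphi^{-1}(1/2)}(K+K')$ from Theorem~\ref{lin-rel-thm}(\romannumeral2). Your $\varepsilon$-fattening remark, which addresses the tacit use of lower-dimensional bodies, is a worthwhile addition that the paper does not include.
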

\begin{proof}
To show the non-existence of a reverse inequality, for $i=2,\cdots,n$, take the convex bodies
  \begin{align*}
    K=[-e_{1},e_{1}] \,\,\,\,and \,\,\,\,K'=\sum^{i}_{k=2}[-e_k,e_k].
  \end{align*}
clearly, $r_{i}(K)=r_{i}(K')=0$. On the other hand,
\begin{align*}
K+_\varphi K'\supseteq\frac{1}{2\varphi^{-1}(1/2)}(K+K')=\frac{1}{2\varphi^{-1}(1/2)}\sum^{i}_{k=1}[-e_k,e_k],
\end{align*}
So $r_i(K+_\varphi K')>0$, then there exists no constant $c>0$ such that $$ cr_i(K+_\varphi K')\leq r_i(K)+r_i(K').$$ We complete the proof.
\end{proof}
If we take $\varphi (t)=t^p$,  the following corollaries is obtained by Gonz\'alez and Hern\'andez Crfre \cite{gon-her-on2014}.
\begin{cor}
  Let  $K, K'\in \mathcal K^n_o$ and $p\geq 1$. Then
  \begin{align*}
&2^{\frac{p-1}{p}}r_n(K+_p K')\geq r_n(K)+r_n(K'),   \,\,\,\,\,\,\,\, for \,\,p\geq 1,\\
 &2^{\frac{3p-2}{2p}}r_i(K+_p K')\geq r_i(K)+r_i(K'), \,\,\,for \,\,1\leq p\leq 2, i=1,\cdots,n-1,\\
  &r_i(K+_p K')\geq \max\{r_i(K), r_i(K')\}, \,\,\,for \,\, p\geq 2, i=1,\cdots,n-1.
  \end{align*}
  All equalities are best possible.
    \end{cor}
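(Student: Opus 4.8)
The plan is to read off the three inequalities from results already at hand: the first two are nothing but the statements of Theorem \ref{in-rad-ine-thm} evaluated at $\varphi(t)=t^p$, and the third follows at once from part (\romannumeral 3) of Theorem \ref{lin-rel-thm}. So the proof is essentially a substitution plus one short inclusion argument.

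First I would note that for $p\ge 1$ the function $\varphi(t)=t^p$ belongs to $\mathcal C$ (it is convex and strictly increasing on $[0,\infty)$, with $\varphi(0)=0$ and $\varphi(1)=1$), and that its inverse is $\varphi^{-1}(s)=s^{1/p}$, so $\varphi^{-1}(1/2)=2^{-1/p}$. Consequently the two constants occurring in Theorem \ref{in-rad-ine-thm} become
\begin{align*}
2\varphi^{-1}(1/2)=2^{1-\frac1p}=2^{\frac{p-1}{p}},\qquad
2\sqrt2\,\varphi^{-1}(1/2)=2^{\frac32-\frac1p}=2^{\frac{3p-2}{2p}}.
\end{align*}
Plugging these into (\ref{inn-rad-ine-1}) gives $2^{(p-1)/p}r_n(K+_pK')\ge r_n(K)+r_n(K')$ for all $p\ge1$, and plugging them into (\ref{inn-rad-ine-2}) gives $2^{(3p-2)/(2p)}r_i(K+_pK')\ge r_i(K)+r_i(K')$ for $i=1,\dots,n-1$.

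For the remaining inequality I would not use Theorem \ref{in-rad-ine-thm} at all. By part (\romannumeral 3) of Theorem \ref{lin-rel-thm} one has $conv(K\cup K')\subseteq K+_\varphi K'$, hence in particular $K\subseteq K+_pK'$ and $K'\subseteq K+_pK'$; since the inner radii $r_i$ are monotone under inclusion, this yields $r_i(K+_pK')\ge\max\{r_i(K),r_i(K')\}$ (in fact for every $\varphi\in\mathcal C$). The split into the ranges $1\le p\le2$ and $p\ge2$ merely records each bound in the regime where it is best possible; note that at $p=2$ the constant $2^{(3p-2)/(2p)}$ equals $2$, so the two statements agree there.

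Finally, sharpness would be checked by specializing the extremal configurations from the proof of Theorem \ref{in-rad-ine-thm}: taking $K=K'$ and using $K+_pK=2^{1/p}K$ (the case $\varphi(t)=t^p$ of (\ref{orl-dif-add})) makes the first inequality an equality, and the pair of unit balls $B_{i,\tilde L_0}$, $B_{i,\tilde L'_0}$ sitting in complementary $i$-dimensional subspaces makes the second an equality; for the third ($p\ge2$) one exhibits bodies for which the Orlicz sum does not enlarge the governing inscribed $i$-ball, exactly as done by Gonz\'alez and Hern\'andez Cifre \cite{gon-her-on2014}. I expect this last verification of sharpness to be the only point requiring genuine thought, everything preceding it being a direct consequence of the theorems already proved.
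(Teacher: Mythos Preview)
Your proposal is correct and matches the paper's (implicit) approach: the paper offers no separate proof of this corollary, simply remarking that it follows by taking $\varphi(t)=t^p$ in the preceding Orlicz results. Your computation of the constants $2\varphi^{-1}(1/2)=2^{(p-1)/p}$ and $2\sqrt2\,\varphi^{-1}(1/2)=2^{(3p-2)/(2p)}$ is exactly the intended substitution, and the sharpness examples you cite are those from the proof of Theorem~\ref{in-rad-ine-thm}.

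You are also right to observe that the third line, $r_i(K+_pK')\ge\max\{r_i(K),r_i(K')\}$, is \emph{not} a specialization of (\ref{inn-rad-ine-2}) (for $p>2$ the constant $2^{(3p-2)/(2p)}$ exceeds $2$, so (\ref{inn-rad-ine-2}) gives a strictly weaker bound), and to derive it instead from the inclusion $conv(K\cup K')\subseteq K+_\varphi K'$ of Theorem~\ref{lin-rel-thm}(\romannumeral 3) together with the monotonicity of $r_i$ under inclusion. The paper glosses over this distinction, so your proof is in fact more complete on this point than the paper's own treatment.
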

\begin{cor}
    Let  $K, K'\in \mathcal K^n_o$ and $p\geq 1$. Then
    \begin{align*}
      r_1(K+_pK')\leq r_1(K)+r_1(K')
    \end{align*}
    which is tight, and for any $i=1,\cdots,n-1$, there exists no constant $c>0$ such that $cr_i(K+_pK')\leq r_i(K)+r_i(K')$.
\end{cor}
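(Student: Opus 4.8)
The plan is to obtain the whole statement by specializing the general Orlicz results already established to $\varphi(t)=t^p$, $p\ge 1$. First I would record that $\varphi(t)=t^p$ lies in $\mathcal C$ (it is convex and strictly increasing on $[0,\infty)$ with $\varphi(0)=0$ and $\varphi(1)=1$), that $K+_\varphi K'=K+_pK'$ by the definition of the Orlicz sum, and that $\varphi^{-1}(1/2)=2^{-1/p}$. For the inequality $r_1(K+_pK')\le r_1(K)+r_1(K')$ I would argue exactly as in the remark preceding the last Proposition, read with this $\varphi$: by part (\romannumeral 2) of Theorem \ref{lin-rel-thm} we have $K+_pK'\subseteq K+K'$; since $r_1(\cdot)=D(\cdot)/2$ and the diameter is monotone under set inclusion, $r_1(K+_pK')\le r_1(K+K')$; and the classical subadditivity of the diameter under Minkowski addition recorded in the introduction gives $r_1(K+K')\le r_1(K)+r_1(K')$. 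Chaining the three relations yields the bound.

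For tightness of the constant, I would exhibit a family of pairs along which the ratio $r_1(K+_pK')/\bigl(r_1(K)+r_1(K')\bigr)$ tends to $1$, so that no $c>1$ can be inserted on the left (equivalently, no constant smaller than $1$ can be inserted on the right). Taking $K=[-e_1,e_1]$ and $K'=\varepsilon B_n$ with $\varepsilon\to 0^+$, a short support-function computation gives $h_{K+_pK'}(u)=\bigl(|u_1|^p+\varepsilon^p\bigr)^{1/p}$ for $u\in S^{n-1}$, whence $D(K+_pK')=2(1+\varepsilon^p)^{1/p}$ and $r_1(K+_pK')=(1+\varepsilon^p)^{1/p}$, while $r_1(K)+r_1(K')=1+\varepsilon$; the ratio tends to $1$. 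When $p=1$ one may instead simply take $K=K'=[-e_1,e_1]$, for which $+_1$ is ordinary Minkowski addition and equality holds outright.

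For the non-existence part I would invoke the Proposition immediately preceding the corollary with $\varphi(t)=t^p\in\mathcal C$: it asserts that for $i=2,\dots,n$ there is no constant $c>0$ with $c\,r_i(K+_\varphi K')\le r_i(K)+r_i(K')$, and this transfers verbatim to $+_p$. The natural index range here is $i=2,\dots,n$; for $i=1$ the first part of the corollary already provides such an inequality with $c=1$, so the index set in the statement should be read accordingly.

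There is essentially no technical obstacle, since the corollary is a transcription of facts already proved for arbitrary $\varphi\in\mathcal C$. The only points requiring a little care are keeping $\varphi^{-1}(1/2)=2^{-1/p}$ straight throughout, and the tightness clause: for $p>1$ equality is attained by no pair of convex bodies with nonempty interior, so it must be realized only through a degenerating family as above rather than by a single extremal example.
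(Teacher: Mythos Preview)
Your approach matches the paper's: the corollary is simply the specialization $\varphi(t)=t^p$ of the general Orlicz results, and the paper treats it as such without a separate proof. Your derivation of $r_1(K+_pK')\le r_1(K)+r_1(K')$ via $K+_pK'\subseteq K+K'$ and subadditivity of the diameter is exactly the argument the paper gives in the paragraph preceding the Proposition, and your invocation of that Proposition for the non-existence part (with the corrected index range $i=2,\dots,n$, as you rightly observe) is likewise what the paper intends.

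One small technical slip in your tightness example: $K=[-e_1,e_1]$ has empty interior in $\mathbb R^n$ for $n\ge 2$, so it does not belong to $\mathcal K^n_o$ as defined in the paper. The fix is immediate---replace the segment by a thin $0$-symmetric ellipsoid $E_\delta$ with semi-axes $1,\delta,\dots,\delta$ (or by $[-e_1,e_1]+\delta B_n$) and let $\delta\to 0$ together with $\varepsilon\to 0$; the same support-function computation goes through and the ratio still tends to $1$. Apart from this, your proposal is correct and in fact supplies more detail on tightness than the paper, which simply asserts it by reference to Gonz\'alez and Hern\'andez Cifre.
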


 More specially, if $p=1$, it was shown in \cite{gon-her-suc2012}.

\section {Radii of Orlicz difference body }
The Orlicz difference body of a convex body is defined as the Orlicz Minkowski addition of $K$ and it's reflection with origin, e.g.
\begin{align*}
  K+_\varphi(-K)
\end{align*}
which is a 0-symmetric body.
In \cite{che-xu-yan-rog2014} the Rogers Shephard inequality for the Orlicz difference body of a planar convex body is obtained.
In this section we interested in the behavior of the radii regarding Orlicz difference body.
\begin{thm}
  Let $\varphi\in \mathcal C$, and $K\in \mathcal K^n_o$, For all $i=1,2\cdots, n$, then
\begin{align}
\label{out-rad-ine-dif-1} \frac{\sqrt 2}{2\varphi^{-1}(1/2)}\sqrt{\frac{i+1}{i}}R_i(K)\leq R_i(K+_\varphi (-K)) \leq 2R_i(K),\\
\label{inn-rad-ine-dif-1}\frac{1}{\varphi^{-1}(1/2)}r_i(K)\leq r_i(K+_\varphi (-K))<2(i+1)r_i(K).
\end{align}
The inequality (\ref{out-rad-ine-dif-1}) are best possible.
\end{thm}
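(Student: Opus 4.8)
The plan is to bracket the Orlicz difference body between two dilates of the classical difference body $\mathrm DK:=K+(-K)$, so that all four estimates reduce to known facts about $\mathrm DK$. Applying part (ii) of Theorem \ref{lin-rel-thm} to the pair $K,-K\in\mathcal K^n_o$ gives
\begin{align}\label{diff-sandwich}
\frac1{2\varphi^{-1}(1/2)}\,\mathrm DK\ \subseteq\ K+_\varphi(-K)\ \subseteq\ \mathrm DK .
\end{align}
Since $R_i$ and $r_i$ are monotone under inclusion and homogeneous of degree one, (\ref{diff-sandwich}) yields
\begin{align*}
\tfrac1{2\varphi^{-1}(1/2)}R_i(\mathrm DK)\le R_i(K+_\varphi(-K))\le R_i(\mathrm DK),\qquad \tfrac1{2\varphi^{-1}(1/2)}r_i(\mathrm DK)\le r_i(K+_\varphi(-K))\le r_i(\mathrm DK).
\end{align*}
Hence it suffices to insert the classical bounds
\begin{align}\label{diff-classical}
\sqrt{\tfrac{2(i+1)}{i}}\,R_i(K)\ \le\ R_i(\mathrm DK)\ \le\ 2R_i(K),\qquad 2\,r_i(K)\ \le\ r_i(\mathrm DK)\ <\ 2(i+1)\,r_i(K),
\end{align}
due to Gonz\'alez and Hern\'andez Cifre \cite{gon-her-suc2012,gon-her-on2014}, and to simplify $\frac1{2\varphi^{-1}(1/2)}\sqrt{\tfrac{2(i+1)}{i}}=\frac{\sqrt2}{2\varphi^{-1}(1/2)}\sqrt{\tfrac{i+1}{i}}$ and $\frac{2}{2\varphi^{-1}(1/2)}=\frac1{\varphi^{-1}(1/2)}$; this produces exactly (\ref{out-rad-ine-dif-1}) and (\ref{inn-rad-ine-dif-1}).

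For completeness I would record short proofs of the non-trivial parts of (\ref{diff-classical}). For the lower estimate on $R_i(\mathrm DK)$: projection onto $L\in\mathcal L^n_i$ commutes with Minkowski addition, so $\mathrm DK|L=(K|L)+(-(K|L))$ is the difference body of $K|L$ inside $L\cong\mathbb R^i$; for a $0$-symmetric convex body $C\subseteq\mathbb R^i$ one has $R(C)=\tfrac12 D(C)$, and since $D\big((K|L)-(K|L)\big)=2D(K|L)$ this gives $R(\mathrm DK|L)=D(K|L)\ge\sqrt{2(i+1)/i}\,R(K|L)$ by Jung's inequality in $\mathbb R^i$; taking the minimum over $L$ gives $R_i(\mathrm DK)\ge\sqrt{2(i+1)/i}\,R_i(K)$, while $R(\mathrm DK|L)\le 2R(K|L)$ is immediate and gives the upper estimate. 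For the inner radii: if $B$ is an $i$-ball of radius $r_i(K)$ contained in $K$ and lying in an affine plane $x+L$ with $L\in\mathcal L^n_i$, then $B-B\subseteq K-K=\mathrm DK$ is an $i$-ball of radius $2r_i(K)$ contained in $L$, so $r_i(\mathrm DK)\ge 2r_i(K)$. The remaining bound $r_i(\mathrm DK)<2(i+1)r_i(K)$ is the delicate one, and I would quote it from \cite{gon-her-suc2012}.

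It remains to show that (\ref{out-rad-ine-dif-1}) is best possible, i.e. to exhibit equality cases. For the left inequality with $i=1$: if $K$ is $0$-symmetric then $K+_\varphi(-K)=K+_\varphi K=\frac1{\varphi^{-1}(1/2)}K$ by (\ref{orl-dif-add}), whence $R_1(K+_\varphi(-K))=\frac1{\varphi^{-1}(1/2)}R_1(K)=\frac{\sqrt2}{2\varphi^{-1}(1/2)}\sqrt{\tfrac{2}{1}}\,R_1(K)$, so equality holds for every $\varphi\in\mathcal C$. For $i\ge 2$ one takes $\varphi=Id$ (so that $\varphi^{-1}(1/2)=1/2$ and $K+_\varphi(-K)=\mathrm DK$) together with a body $K$ whose optimal $i$-dimensional projection is a regular $i$-simplex --- for instance a regular $n$-simplex when $i=n$ --- so that Jung's inequality used above becomes an equality; this is precisely the classical extremal configuration in \cite{gon-her-suc2012,gon-her-on2014}. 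For the right inequality one takes $\varphi=Id$ and $K=B_n$: then $K+_\varphi(-K)=2B_n$ and $R_i(2B_n)=2=2R_i(B_n)$.

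The only genuinely hard ingredient is the strict upper bound $r_i(\mathrm DK)<2(i+1)r_i(K)$ for the classical difference body: in contrast with the other three estimates in (\ref{diff-classical}), it does not follow from a one-line inclusion or projection argument (the naive projection-and-centroid argument only yields a weaker, dimension-dependent constant), and it is the reason (\ref{inn-rad-ine-dif-1}) is not asserted to be sharp. Everything else is bookkeeping with the inclusions of Theorem \ref{lin-rel-thm}, the behaviour of the radii under dilation and inclusion, and the identity (\ref{orl-dif-add}).
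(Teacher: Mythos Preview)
Your derivation of the four inequalities is exactly the paper's: sandwich $K+_\varphi(-K)$ between $\frac{1}{2\varphi^{-1}(1/2)}\mathrm D K$ and $\mathrm DK$ via Theorem~\ref{lin-rel-thm}(ii), then feed in the classical bounds for $R_i(\mathrm DK)$ and $r_i(\mathrm DK)$ from \cite{gon-her-suc2012}. Your auxiliary sketches of those classical bounds are fine and not in the paper, but they are not the issue.

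The gap is in your sharpness argument. ``Best possible'' here means: for every $\varphi\in\mathcal C$ there is a body realising equality. Your extremal examples for the right-hand inequality and for the left-hand inequality with $i\ge2$ fix $\varphi=\mathrm{Id}$, which only shows tightness for that single $\varphi$. Concretely, your choice $K=B_n$ gives $K+_\varphi(-K)=B_n+_\varphi B_n=\frac{1}{\varphi^{-1}(1/2)}B_n$, so $R_i(K+_\varphi(-K))=\frac{1}{\varphi^{-1}(1/2)}\le 2=2R_i(K)$ with equality only when $\varphi(1/2)=1/2$; for any other $\varphi$ the upper bound is \emph{not} attained by $B_n$. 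Likewise, setting $\varphi=\mathrm{Id}$ in the simplex case says nothing about the lower bound for a general $\varphi$.

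The paper instead produces, for each $i$, bodies that work uniformly in $\varphi$. For the upper bound it takes $K=[0,e_1]+\sum_{k=i+1}^n[-e_k,e_k]$ and uses the squeeze
\[
[-e_1,e_1]=\operatorname{conv}\bigl([0,e_1]\cup[-e_1,0]\bigr)\subseteq[0,e_1]+_\varphi[-e_1,0]\subseteq[0,e_1]+[-e_1,0]=[-e_1,e_1]
\]
(parts (ii) and (iii) of Theorem~\ref{lin-rel-thm}) to get $(K+_\varphi(-K))|L_0=[-e_1,e_1]$ for every $\varphi$, whence $R_i(K+_\varphi(-K))=1=2R_i(K)$. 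For the lower bound with $i=n$ it takes the regular simplex $K_n$, computes $h_{K_n}(u)=\max_j u_j$ and $h_{-K_n}(u)=-\min_j u_j$, and shows that the Orlicz support function $h_{K_n+_\varphi(-K_n)}(u)$ is maximised at $u=(1/\sqrt2,0,\dots,0,-1/\sqrt2)$ with value $\frac{\sqrt2}{2\varphi^{-1}(1/2)}=\frac{\sqrt2}{2\varphi^{-1}(1/2)}\sqrt{\tfrac{n+1}{n}}R_n(K_n)$; the case $i<n$ is then obtained by thickening $K_i$ orthogonally. Your $i=1$ argument via (\ref{orl-dif-add}) is correct and $\varphi$-uniform, but for the remaining cases you need constructions of this kind rather than specialising $\varphi$.
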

\begin{proof}
 By Theorem \ref{lin-rel-thm} we have
 \begin{align*}\frac{1}{2\varphi^{-1}(1/2)} K+(-K)\subseteq K+_\varphi (-K)\subseteq K+(-K).
 \end{align*}
Then
\begin{align*}
\frac{1}{2\varphi^{-1}(1/2)}R_i(K+(-K))\leq R_i(K+_\varphi(-K))\leq R_i (K+(-K)).
\end{align*}
By the fact $R_i(K+(-K))\leq 2R_i(K)$, and $\sqrt 2\sqrt{\frac{i+1}{i}}R_i(K)\leq R_i(K+(-K))$, we have
\begin{align}
 \frac{\sqrt 2}{2\varphi^{-1}(1/2)}\sqrt{\frac{i+1}{i}}R_i(K)\leq R_i(K+_\varphi(-K))\leq2R_i(K).
\end{align}
The similar with the proof of (\ref{inn-rad-ine-dif-1}) by using of the fact $$2r_i(K)\leq r_i(K+(-K))<2(i+1)r_i(K).$$

Now we show that the inequality (\ref{out-rad-ine-dif-1}) are the best possible.
Fix $i\in\{1,\cdots,n\}$, consider the convex body
\begin{align*}
  K=[0,e_1]+\sum^{n}_{k=i+1}[-e_k,e_k].
\end{align*}
Let $L\in\mathcal L^n_i$, and $\hat L=span\{e_1,\cdots,e_i\}\in\mathcal L^n_i$, it is clearly that
\begin{align*}
  R_i(K)=\min\{R(K|L):L\in \mathcal L^n_i\}=R(K|\hat  L)=R([0,e_1])=\frac{1}{2},
\end{align*}
here, if $i=n$ we take $K=[0,e_1]$.
On the other hand, notice that
\begin{align*}
 R_i[(K&+_\varphi(-K))]=R[(K|L)+_\varphi (-K)|L]\\
 &= R([0,e_1]+_\varphi[-e_1,0])\\
 &=R([-e_1,e_1])=1.
\end{align*}
In fact, by Theorem \ref{lin-rel-thm}, we have
\begin{align*}
 [0,e_1]+_\varphi[-e_1,0]\subseteq [0,e_1]+[-e_1,0]=[-e_1,e_1].
\end{align*}
On the other hand we have $$[-e_1,e_1]=conv([0,e_1]\cup[-e_1,0])\subseteq [0,e_1]+_\varphi [-e_1,0].$$
So we have
\begin{align*}
  R_i[K+_\varphi(-K)]=R([-e_1,e_1])=1=2R_i(K).
\end{align*}

If $i=n$, let $K_n$ be the $n-$dimensional simplex embedded in $\mathbb R^{n+1}$, lying in the hyperplane $\{x=(x_1,\cdots,x_{n+1})\in \mathbb R^{n+1}: \sum^{n+1}_{j=1}x_j=0\}$, given by
\begin{align*}
  K_n=conv\{p_k: p_{kk}=\frac{n}{n+1}, p_{kj}=\frac{-1}{n+1} \,\,for\,\,j\neq k, \,\,k=1,2,\cdots n+1\}
\end{align*}
Note that $R_n(K_n)=\sqrt{\frac{n}{n+1}}$.

Since $K_n+_\varphi (-K_n)$ is 0-symmetric convex body, then
$$R_n(K_n+_\varphi(-K_n))=\max\{\ h_{K_n+_\varphi(-K_n)}(u): \lVert u\rVert_2=1 \,\,and \,\, \sum_{j=1}^{n+1}u_j=0\},$$
Notice that the value of the support function of a convex body at any vector is attained in an extreme point (see \cite{gru-con2007}). So we consider the vertices of $K_n$. Since
\begin{align*}
  \langle p_k\cdot u\rangle=\frac{n}{n+1}u_k-\frac{1}{n+1}\sum_{j\neq k}u_j=u_k.
\end{align*}
Then $h_{K_n}(u)=\max\{u_1,\cdots,u_{n+1}\}$.
Without loss of generality we may assume that $u_1\geq u_2\cdots\geq u_{n+1}$. Hence by the Orlicz Minkowski addition we have
\begin{align*}
\varphi\left(\frac{h_{K_n}(u)}{h_{K_n+_\varphi(-K_n)}(u)}\right)+\varphi\left(\frac{h_{K_n}(-u)}{h_{K_n+_\varphi(-K_n)}(u)}\right)=1,
\end{align*}
Notice that $\varphi$ is increasing and observe that $u_1\geq 0$ and $u_{n-1}\leq 0$. Then the maximum of $h_{K_n+_\varphi(-K_n)}(u)$ under the conditions $\rVert u\lVert_2=1$ and $\sum^{n+1}_{j=1}u_j=0$, is attained in the point $(1/\sqrt 2, 0,\cdots,-1/\sqrt 2)$, therefore,
\begin{align*}
  R_n(K_n+_\varphi(-K_n))=\max\{h_{K_n+_\varphi(-K_n)}(u):u\in S^{n-1}\}=\frac{\sqrt 2}{2\varphi^{-1}(1/2)}.
\end{align*}
Then we have
\begin{align*}
R_n(K_n+_\varphi(-K_n))=\frac{\sqrt 2}{2\varphi^{-1}(1/2)}=\frac{\sqrt 2}{2\varphi^{-1}(1/2)}\sqrt{\frac{n+1}{n}}R_n(K_n).
\end{align*}
So we get the equality condition for $i=n$.

If $i<n$, we take the $i-$dimensional simplex $K_i$ and consider the convex body $K=K_i+cM_{n-i}$, where $M_{n-i}$ is a $(n-i)-$dimensional unit cube in (aff $K_i$)$^\bot$, and $c>0$ is sufficiently large such that $R_i(K+_\varphi(-K))=R(K_i+_\varphi K_{i})$ and $R_i(K)=R(K_i)$. By the same argument we obtain the case of $i<n$. So we complete the proof.

\end{proof}

\bibliographystyle{amsplain}

\end{document}